\newtheorem{theorem}{Theorem}[section]
\newtheorem{lemma}[theorem]{Lemma}
\newtheorem{prop}[theorem]{Proposition}
\newtheorem{corollary}[theorem]{Corollary}
\theoremstyle{definition}
\newtheorem{definition}[theorem]{Definition}
\theoremstyle{remark}
\newcommand{\diag}{\operatorname{diag}}
\newcommand{\Aut}{\operatorname{Aut}}
\newcommand{\Out}{\operatorname{Out}}
\newcommand{\soc}{\operatorname{soc}}
\newcommand{\Stab}{\operatorname{Stab}}
\newcommand{\mleq}{<_{\max}}
\newcommand{\supp}{\operatorname{supp}}
\title{Finite groups with the minimal generating set exchange property}
\author{Andrea Lucchini}
\address{Andrea Lucchini. University of Padova (Italy), Dipartimento di Matematica ``Tullio Levi Civita''. ORCID: https://orcid.org/0000-0002-2134-4991}
\email{lucchini@math.unipd.it}
\author{Patricia Medina Capilla}
\address{Patricia Medina Capilla, Mathematics Institute , University of Warwick, Coventry, CV4 7AL. ORCID: https://orcid.org/0009-0005-6183-5954}
\email{Patricia.Medina-Capilla@warwick.ac.uk}
\date{May 2025}
\subjclass[2020]{20F05}
\keywords{Generating sets, maximal subgroups}
\thanks{Project funded by the EuropeanUnion - NextGenerationEU under the National Recovery and Resilience Plan (NRRP), Mission 4 Component 2 Investment 1.1- Call PRIN 2022 No. 104 of February 2, 2022 of Italian Ministry of University and Research; Project 2022PSTWLB (subject area: PE - Physical Sciences and Engineering) ``Group Theory and Applications''. The first author is member of GNSAGA (INDAM). This work was supported by the Additional Funding Programme for Mathematical Sciences, delivered by EPSRC (EP/V521917/1) and the Heilbronn Institute for Mathematical Research.}
\begin{document}
\begin{abstract}
Let  $d(G)$ be the smallest cardinality of a generating set of a finite group $G.$  We give a complete classification of the finite groups with the property that, whenever $	\langle x_1, \dots, x_{d(G)} \rangle = \langle y_1, \dots, y_{d(G)} \rangle = G$,
	for any $1 \leq i \leq d(G)$ there exists  $1 \leq j \leq d(G)$ such that
	$\langle x_1, \dots, x_{i-1}, y_j, x_{i+1}, \dots, x_{d(G)} \rangle = G.$ We also prove that for every finite group $G$ and every maximal subgroup $M$ of $G$, there exists a generating set for $G$ of minimal size  in which at least $d(G)-2$
    elements belong to $M$. We conjecture that the stronger statement holds, that there exists a generating set of size $d(G)$ in which only one element does not belong to $M$, and we prove this conjecture for some suitable choices of $M$.
\end{abstract}

\maketitle

\section{Introduction}
Given a finite group $G$, we denote with $d(G)$ the smallest cardinality of a generating set for $G.$
\begin{definition}
		A group $G$ has the minimal generating set exchange (MGSE) property if 
		\[
		\langle x_1, \dots, x_{d(G)} \rangle = \langle y_1, \dots, y_{d(G)} \rangle = G
		\]
		implies that for any $1 \leq i \leq d(G)$ there exists $1 \leq j \leq d(G)$ such that
		\[
		\langle x_1, \dots, x_{i-1}, y_j, x_{i+1}, \dots, x_{d(G)} \rangle = G.
		\]
	\end{definition}
The purpose of this work is to provide a comprehensive answer to a question proposed by Peter J. Cameron,  Aparna Lakshmanan S and  Midhuna V Ajith \cite[Question 2]{cam}, which, using the previous definition, can be reformulated as follows: {\sl{which finite groups satisfy the MGSE property?}}

Clearly every finite cyclic group satisfies the MGSE property, so we may restrict our attention to non-cyclic finite groups. A partial answer is given in \cite{alars}, where a complete classification of the finite solvable groups $G$ with the MGSE property is obtained. More precisely, if $G$ is nilpotent and non-cyclic then $G$ satisfies the MGSE property if and only if $G$ is a finite $p$-group \cite[Corollary 4.6]{alars}. On the other hand, if $G$ is solvable but not nilpotent, then $G$ satisfies the MGSE property if and only if its quotient over the Frattini subgroup is isomorphic to a semidirect product $N^\delta \rtimes H,$ where $\delta$ is a positive integer, $H$ is cyclic of prime order, $N$ is an irreducible $H$-module and $C_H(V) = 1$ \cite[Theorem 4.13]{alars}.

In this paper, we complete the classification of the finite groups with the MGSE property, proving the following result already conjectured in \cite{alars}.
	
	\begin{theorem} \label{theorem:main}
		If $G$ has the MGSE property, then $G$ is solvable.
	\end{theorem}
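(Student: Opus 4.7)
The plan is by contradiction. Take $G$ to be a non-solvable finite group with the MGSE property of minimal order, and aim to extract from the non-solvability an explicit violation of MGSE. A first reduction is that the MGSE property passes from $G$ to its Frattini quotient $G/\Phi(G)$, since $d(G/\Phi(G)) = d(G)$ and any minimal generating set of the quotient lifts to one of $G$; combined with minimality of $|G|$ this lets me assume $\Phi(G) = 1$. Because $G$ is non-solvable some chief factor is non-abelian, so using $\Phi(G) = 1$ I can select a minimal normal subgroup $N \cong S^k$ with $S$ a non-abelian simple group.

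The core step is to construct two minimal generating tuples $(x_1, \dots, x_d)$ and $(y_1, \dots, y_d)$ of $G$, with $d = d(G)$, violating MGSE at the $d$-th coordinate. The pattern I aim for is: the subgroup $H := \langle x_1, \dots, x_{d-1}\rangle$ is contained in at least $d$ distinct maximal subgroups $M_1, \dots, M_d$ of $G$; there is an $x_d$ with $\langle H, x_d\rangle = G$; and there is a generating $d$-tuple $(y_1, \dots, y_d)$ of $G$ with each $y_j \in M_j$. Then for every $j$ the tuple $(x_1, \dots, x_{d-1}, y_j)$ is contained in $M_j$, hence proper. This mechanism is already visible in $G = A_5$: taking $x_2 = (14)(23) \in A_4 \cap D_{10}$ together with the generating pair $y_1 = (123) \in A_4$ and $y_2 = (12345) \in D_{10}$, neither replacement at position $1$ yields a generating pair.

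To realise such a configuration in a general non-solvable $G$, I would exploit the non-abelian socle $N$ together with the structure of the almost simple quotient $G/C_G(N)$. The richness of the maximal-subgroup lattice of almost simple groups — in particular the fact that a non-trivial element typically lies in multiple maximal subgroups of different types — allows one to arrange the image of $H$ in $G/N$ to be contained in several distinct maximals, and then to lift. The $y_j$'s are produced inside the corresponding maximal subgroups using strong generation properties of almost simple groups (in the spirit of Guralnick--Kantor and subsequent refinements on generating pairs within prescribed cosets), together with a Gasch\"utz-type lifting through any solvable quotient of $G$ above $N$.

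The main obstacle is arranging this configuration uniformly across all structures of the non-abelian socle $N = S^k$, in particular when $k > 1$ and $G$ permutes the simple factors non-trivially, as well as when $d(G) > 2$ so that additional coordinates must be controlled simultaneously. Verifying that the assembled tuples are genuinely \emph{minimal} generating sets — rather than merely generating sets — requires careful bookkeeping through the crown structure of $G$, in the spirit of the Dalla Volta--Lucchini analysis of minimal generating sets for groups with non-abelian chief factors, and this is where the bulk of the technical work will lie.
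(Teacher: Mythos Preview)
Your proposal is a plan rather than a proof, and the key step is neither carried out nor reducible to the results you cite. The mechanism you describe --- fixing $H = \langle x_1,\dots,x_{d-1}\rangle$ inside $d$ distinct maximal subgroups $M_1,\dots,M_d$ and then threading a generating $d$-tuple $(y_1,\dots,y_d)$ with $y_j \in M_j$ --- is correct for $A_5$, but you give no argument that such a configuration exists in a general monolithic group with non-abelian socle. Guralnick--Kantor style spread results say that a given non-identity element lies in \emph{some} generating pair; they do not let you prescribe, for a given pair of maximal subgroups $M_1,M_2$ with non-trivial intersection, generators $y_1\in M_1$ and $y_2\in M_2$. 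Nor is it clear how to arrange $d-1$ elements to sit simultaneously in $d$ prescribed maximals when $d>2$ while still completing to a generating set. Your reduction is also weaker than needed: passing to $G/\Phi(G)$ leaves you with a group that may have several minimal normal subgroups of both types, and your appeal to ``the almost simple quotient $G/C_G(N)$'' does not control $d(G)$ versus $d(G/C_G(N))$.

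The paper takes a quite different route. It first passes (via the fact that MGSE is inherited by all quotients, not just the Frattini quotient) to a monolithic group $G$ with non-abelian socle $N$. Then, instead of seeking $d$ maximal subgroups, it works with a \emph{single} proper subgroup $\tilde H<G$ built from a ``flexible'' subgroup of the relevant almost simple group. The technical heart is a refinement of the Lucchini--Menegazzo generation theorem: starting from $g_1,\dots,g_d\in\tilde H$ projecting onto $G/N$, one can adjust only $g_1,g_2$ by elements of $N$ to get a generating set of $G$, and moreover one may force either the adjusted $g_1$ or the adjusted $g_2$ to remain in $\tilde H$. Applying this twice produces two minimal generating sets such that replacing the first coordinate of one by any coordinate of the other either lands entirely inside $\tilde H$ or fails to generate modulo $N$. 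This avoids entirely the problem of producing your multi-maximal configuration.
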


    The proof of the previous theorem strongly depends on a somewhat technical result, Lemma \ref{lemma:constructiveargument}, concerning the generation properties of finite monolithic groups with non-abelian socle. The proof of Lemma \ref{lemma:constructiveargument} is obtained by inserting some modifications to the proof of the fundamental theorem in \cite{lm}, which asserts that if the socle $N$ of a monolithic group $G$ is non-abelian, then for every $d\geq 2$ and every subset $\{g_1,...,g_d\}$ such that $G=N\langle g_1,...,g_d\rangle $, there exist $n_1,\dots,n_d$ in $N$ such that $G=\langle g_1n_1,\dots,g_dn_d\rangle.$ The proof contained in \cite{lm} is constructive and highlights a strong degree of freedom in the choice of the elements $n_1,...,n_d.$ The idea of Lemma \ref{lemma:constructiveargument} is that this freedom can be used to prove that there exists a suitable maximal subgroup $M$ of $G$ with the property that for each $1\leq j \leq d,$ one can choose $n_1,...,n_d$ such that all elements $g_in_i$ belong to $M$ except for $g_jn_j.$ The modifications we need to introduce to the proof present in \cite{lm} to obtain Lemma \ref{lemma:constructiveargument} are not many, and much of that proof works without any changes. However, it is a long and complex proof, so we thought it appropriate to prove Lemma \ref{lemma:constructiveargument} in all its details, even at the cost of repeating many of the arguments in \cite{lm}.

 In order to prove Theorem \ref{theorem:main}, we showed that given a generating set $\{ \bar g_1, \dots, \bar g_d \}$ of $G/N$, we can choose preimages $g_i$ of $\bar g_i$ such that 
$\langle g_1, \dots, g_d \rangle = G,$
    while $g_2, \dots, g_d$ all lie in a prescribed maximal subgroup of $G$.
  It is hence a natural question to ask whether, given a group $G$ and a maximal subgroup $M$, we can always find a generating set of $G$ of minimal size such that a certain number of the elements are contained in $M$.

    For a subgroup $H \leq G$, let
    \[
        \mathscr{D}_H(G) = \max_{\langle g_1, \dots, g_{d(G)} \rangle = G} | \{ g_i : g_i \in H \}|,
    \]
    and
    \[
        \mathscr{D}(G) = \min_{M \mleq G} \mathscr{D}_M(G).
    \]
We follow the terminology given in \cite[Definition 1.1.8]{classes}.
A primitive group $G$ is said to be
\begin{enumerate}
\item a primitive group of type 1 if $G$ has an abelian minimal normal subgroup,
\item a primitive group of type 2 if $G$ has a unique non-abelian minimal normal
subgroup,
\item a primitive group of type 3 if $G$ has two distinct non-abelian minimal
normal subgroups.
\end{enumerate}

Let $M$ be a maximal subgroup of a group $G.$ Then $M/M_G$ is
a core-free maximal subgroup of the quotient group $G/M_G.$ Then $M$ is said to
be
\begin{enumerate}
\item a maximal subgroup of type 1 if $G/M_G$ is a primitive group of type 1,
\item a maximal subgroup of type 2 if $G/M_G$ is a primitive group of type 2,
\item a maximal subgroup of type 3 if $G/M_G$ is a primitive group of type 3.
\end{enumerate}

\begin{theorem}Let $M$ be a maximal subgroup of a finite group $G.$
Then $\mathscr{D}_M(G)\geq d(G)-2.$ Moreover $\mathscr{D}_M(G) = d(G)-1$ if one of the following holds:
\begin{enumerate}
    \item $M$ is of type 1;
    \item $M$ is of type 3;
    \item $d(G)>d(G/M_G),$
     \item $G/N$ is cyclic, if $N/M_G=\soc(G/M_G).$
\end{enumerate}
\end{theorem}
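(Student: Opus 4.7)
The proof proceeds by strong induction on $|G|$. Write $\bar{G}=G/M_G$, $\bar{M}=M/M_G$, and $d=d(G)$; then $\bar{M}$ is a core-free maximal subgroup of the primitive group $\bar{G}$, and its primitive type equals that of $M$. Since $G/N\cong \bar{G}/\soc(\bar{G})$, condition~(4) transports verbatim between $M$ in $G$ and $\bar{M}$ in $\bar{G}$, so that the improved bound $d-1$ propagates under induction in cases~(1), (2), (4).

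The first main step is the reduction to $M_G=1$. Assuming $M_G\neq 1$, the inductive hypothesis yields a generating set $\{\bar{y}_1,\dots,\bar{y}_{d(\bar{G})}\}$ of $\bar{G}$ in which $\mathscr{D}_{\bar{M}}(\bar{G})$ elements belong to $\bar{M}$. Since $M_G\leq M$, each lift $y_i\in G$ belongs to $M$ exactly when $\bar{y}_i\in\bar{M}$. A standard argument on chief factors of $M_G$ allows one to adjoin $d-d(\bar{G})$ elements of $M_G\leq M$ to produce a generating set of $G$, giving
\[
\mathscr{D}_M(G)\;\geq\; \mathscr{D}_{\bar{M}}(\bar{G})+\bigl(d-d(\bar{G})\bigr).
\]
In cases~(1), (2), (4) the inductive bound $\mathscr{D}_{\bar{M}}(\bar{G})\geq d(\bar{G})-1$ already delivers $\mathscr{D}_M(G)\geq d-1$. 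For case~(3), where $d>d(\bar{G})$ forces $M_G\not\leq\Phi(G)$, a non-Frattini chief factor of $G$ inside $M_G$ is complemented by some maximal subgroup $L$ of $G$; I would use $L$ to absorb the single bar-generator lying outside $\bar{M}$ into the supplementary element chosen in $M_G$, thereby promoting the bound to $d-1$. Hence we may assume $M_G=1$, so that $G$ is primitive and $M$ is core-free.

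It remains to treat each primitive type. In \emph{type 1}, $G=N\rtimes M$ with $N$ an irreducible $M$-module: if $d(M)\leq d-1$, take generators $g_2,\dots,g_d\in M$ of $M$ and let $g_1$ be any non-identity element of $N$, forcing $\langle g_1,\dots,g_d\rangle=G$ by irreducibility; if $d(M)=d$, a Gasch\"utz-type argument using $H^1(M,N)$ produces $d-1$ elements inside $M$ together with one element of the form $nm$, $n\in N\setminus\{1\}$, that jointly generate $G$. In \emph{type 3}, with two non-abelian minimal normal subgroups $N_1,N_2$, one reduces to the two type-2 quotients $G/N_i$ and combines two applications of Lemma~\ref{lemma:constructiveargument} with compatible choices of preimages to place $d-1$ elements of a generating set of $G$ inside $M$. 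In \emph{type 2}, Lemma~\ref{lemma:constructiveargument} applies directly: for any generating set $\bar{g}_1,\dots,\bar{g}_d$ of $G/N$ one finds $n_1,\dots,n_d\in N$ making $\{g_in_i\}$ generate $G$ with all but one entry inside a specific maximal $M^{\ast}$. For an arbitrary $M$, inspection of the construction shows that two of the $g_in_i$ may have to lie outside $M$, yielding the general bound $\mathscr{D}_M(G)\geq d-2$. Under condition~(4), $G/N$ is cyclic, so a single $\bar{g}_1$ generates $G/N$; placing $g_1\notin M$ and using the remaining freedom in $n_2,\dots,n_d$ to push the rest of the $g_in_i$ into $M$ gives $\mathscr{D}_M(G)\geq d-1$.

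The principal difficulty is the type~2 case with an arbitrary maximal $M$: Lemma~\ref{lemma:constructiveargument} grants $d-1$ elements inside a specific $M^{\ast}$ that need not be $G$-conjugate to $M$, and this loss of flexibility is exactly why the general bound drops from $d-1$ to $d-2$. A secondary technical hurdle is the case~(3) reduction, where the naive additive combination of the inductive estimate on $\bar{G}$ with the $d-d(\bar{G})$ supplementary elements only gives $d-2$; the upgrade to $d-1$ requires exploiting the non-Frattini chief factor structure of $M_G$ to transfer a generator from outside $M$ to inside $M$.
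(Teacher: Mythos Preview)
Your reduction to $M_G=1$ and your treatment of the general type-2 bound $\mathscr{D}_M(G)\geq d-2$ match the paper's. The remaining cases, however, each contain a genuine gap. For \emph{type 3}, the reduction to the quotients $G/N_i$ cannot work: since $M$ is core-free and $MN_i=G$, the image of $M$ in $G/N_i$ is the entire quotient, so there is no maximal subgroup of $G/N_i$ attached to $M$ on which to run Lemma~\ref{lemma:constructiveargument}. The paper instead realises $G\leq L\times L$ with $M$ the diagonal $\{(l,l):l\in L\}$, fixes generators $l_1,\dots,l_d$ of $L$, uses the probabilistic bound $|\{n\in N:\langle l_1n,l_2,\dots,l_d\rangle=L\}|\geq\tfrac{53}{90}|N|$ from \cite{delu}, and then shows via centralizer estimates on $C_{\Aut(N)}(L/N)$ (semiregularity of its image in $S_n$, together with bounds from \cite{q,ag}) that some tuple $(l_1,l_1n),(l_2,l_2),\dots,(l_d,l_d)$ must generate $G$. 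For \emph{condition (4)}, when $G/N$ is cyclic and $G$ is monolithic one has $d=2$, and Lemma~\ref{lemma:constructiveargument} only places $v_1g_1$ inside the specific subgroup $\tilde H$ built from a flexible $H$, not inside an arbitrary core-free $M$; there is no ``remaining freedom in $n_2,\dots,n_d$'' to exploit. The paper instead takes $1\neq g\in M$ with $\langle g,N\rangle=G$ (possible since $MN=G$ and $M\cap N\neq 1$ by O'Nan--Scott) and applies the spread theorem of Burness--Guralnick--Harper \cite{bgh} to obtain $x$ with $\langle g,x\rangle=G$.

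For \emph{condition (3)}, your inductive bound on $\bar G$ is only $d(\bar G)-2$, so two bar-generators may lie outside $\bar M$, not one; the ``absorb the single bar-generator'' step is both miscounted and left unexplained (how a complement $L$ to a chief factor inside $M_G$ lets you move a generator from outside $M$ to inside is not indicated). The paper avoids this induction entirely: in the monolithic quotient $\bar G$ it directly builds a generating set of the larger size $d(G)>d(\bar G)$ with $d(G)-1$ elements in $\bar M$, by choosing $g_1,\dots,g_{d(G)-1}\in\bar M$ generating modulo $\bar N$ and constructing a single $g_{d(G)}\in\bar N$ (using King's $(2,p)$-generation \cite{king} when $n\geq 2$, and Guralnick--Kantor \cite{gk} when $n=1$) so that the full set generates $\bar G$. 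Finally, for \emph{type 1} with $d(M)=d$, the paper invokes \cite[Lemma 1]{lmsolvable}: given any generating $d$-tuple $m_1,\dots,m_d$ of $M$, some single $m_j$ can be replaced by $m_jn$ with $n\in N$ so that the new tuple generates $G$. Your ``Gasch\"utz-type argument using $H^1(M,N)$'' would have to reproduce exactly this statement, which is not what cohomological vanishing by itself yields.
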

\begin{corollary}For every finite group $G,$ $\mathscr{D}(G)\geq d(G)-2$. Moreover $\mathscr{D}(G)=d(G)-1$ if $G$ is solvable.
\end{corollary}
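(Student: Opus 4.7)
The plan is to deduce the corollary directly from the preceding theorem. For the first inequality $\mathscr{D}(G)\geq d(G)-2$, I would simply note that the theorem gives $\mathscr{D}_M(G)\geq d(G)-2$ for every maximal subgroup $M\mleq G$; taking the minimum over such $M$ yields the claim. In the opposite direction, for any non-trivial finite group $G$ and any maximal subgroup $M$, no generating set of $G$ can be contained in the proper subgroup $M$, so at least one of its elements lies outside $M$, giving $\mathscr{D}_M(G)\leq d(G)-1$ and hence $\mathscr{D}(G)\leq d(G)-1$. This takes care of the first assertion and simultaneously supplies the upper bound needed in the solvable case.

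To upgrade the lower bound to an equality when $G$ is solvable, my plan is to verify that every maximal subgroup of $G$ is of type~1, so that hypothesis~(1) of the theorem applies uniformly. Given $M\mleq G$, the quotient $G/M_G$ is a primitive group with $M/M_G$ as a core-free maximal subgroup, and since $G$ is solvable so is $G/M_G$. But every minimal normal subgroup of a solvable group is elementary abelian, hence $\soc(G/M_G)$ is abelian and $G/M_G$ is a primitive group of type~1. Thus $M$ is of type~1 and part~(1) of the theorem yields $\mathscr{D}_M(G)=d(G)-1$; combined with the upper bound already established, taking the minimum over $M\mleq G$ gives $\mathscr{D}(G)=d(G)-1$, as desired.

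The whole argument is essentially a one-step reduction to the theorem, with the classical fact that primitive solvable groups have abelian socle as its only auxiliary ingredient. I do not foresee any real obstacle; the only point worth a brief check is that the corollary is stated for arbitrary finite $G$, but the trivial case $G=1$ is vacuous and the non-trivial case is covered by the argument above.
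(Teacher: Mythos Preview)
Your proposal is correct and matches the paper's intent: the corollary is stated there without proof, as an immediate consequence of the theorem, and your deduction---minimising the inequality $\mathscr{D}_M(G)\geq d(G)-2$ over all $M$, and invoking part~(1) of the theorem after observing that every maximal subgroup of a solvable group is of type~1---is exactly the intended one-line argument. The only minor remark is that your separate upper-bound observation $\mathscr{D}_M(G)\leq d(G)-1$ is not strictly needed in the solvable case, since part~(1) of the theorem already asserts the equality $\mathscr{D}_M(G)=d(G)-1$ rather than just a lower bound.
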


We do not know of any examples of a maximal subgroup $M$ of a finite group $G$ such that $\mathscr{D}_M(G) = d(G)-2.$ This leads to proposing the conjecture that $\mathscr{D}(G)=d(G)-1$ for every finite group $G.$

    \section{Proof of Theorem \ref{theorem:main}}
    Our first observation towards proving Theorem \ref{theorem:main} is the following.
    
	\begin{lemma}\label{lemma:GN}
		If $G$ has the MGSE property, then so does $G/N$ for any $N \lhd G$.
	\end{lemma}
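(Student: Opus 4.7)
The plan is to reduce MGSE for $\bar G := G/N$ to MGSE for $G$ by lifting minimal generating sequences of $\bar G$ to minimal generating sequences of $G$ in which the ``extra'' entries are forced to lie in $N$. Write $d := d(G)$ and $\bar d := d(\bar G)$; since the image of any generating set of $G$ generates $\bar G$, we have $\bar d \leq d$. Suppose $\langle \bar x_1, \dots, \bar x_{\bar d}\rangle = \langle \bar y_1, \dots, \bar y_{\bar d}\rangle = \bar G$, and pad each sequence with $d - \bar d$ copies of $1_{\bar G}$ to obtain generating $d$-tuples of $\bar G$.

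The first key step is a Gaschütz-type lifting claim: if $k \geq d(G)$ and $\bar g_1, \dots, \bar g_k$ generate $G/N$, then there exist preimages $g_i \in G$ of $\bar g_i$ with $\langle g_1, \dots, g_k\rangle = G$. Granting this, applying it to the two padded $d$-tuples above produces lifts $(x_1, \dots, x_d)$ and $(y_1, \dots, y_d)$ generating $G$, with the additional property that $x_\ell, y_\ell \in N$ for all $\ell > \bar d$, since preimages of $1_{\bar G}$ are exactly elements of $N$.

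Next, fix $i$ with $1 \leq i \leq \bar d$ and apply the MGSE property of $G$ to the two generating $d$-tuples above to obtain $j \in \{1, \dots, d\}$ with $\langle x_1, \dots, x_{i-1}, y_j, x_{i+1}, \dots, x_d\rangle = G$. Projecting modulo $N$, and using that $\bar x_\ell = 1$ for $\ell > \bar d$, this yields $\langle \bar x_1, \dots, \bar x_{i-1}, \bar y_j, \bar x_{i+1}, \dots, \bar x_{\bar d}\rangle = \bar G$. If $j \leq \bar d$, this is exactly the conclusion of MGSE for $\bar G$ at index $i$. If $j > \bar d$, then $y_j \in N$, so $\bar y_j = 1$ and the projected equation collapses to $\langle \bar x_1, \dots, \bar x_{i-1}, \bar x_{i+1}, \dots, \bar x_{\bar d}\rangle = \bar G$, contradicting $d(\bar G) = \bar d$. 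Hence $j \leq \bar d$, and $\bar G$ satisfies MGSE.

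The main technical ingredient, and essentially the only place any real work is hidden, is the lifting claim for general $N$. For abelian minimal normal subgroups this is the classical Gaschütz lemma; for non-abelian ones it follows from the lifting theorem of \cite{lm} quoted in the introduction, and the general case is then obtained by induction along a chief series of $N$. Since the paper already relies heavily on \cite{lm}, this obstacle is effectively dealt with for free.
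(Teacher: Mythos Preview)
Your argument is correct and is exactly the natural one: pad the two minimal generating $\bar d$-tuples of $G/N$ to $d$-tuples by identities, invoke the Gasch\"utz-type lifting lemma to obtain genuine generating $d$-tuples of $G$ with the last $d-\bar d$ entries in $N$, apply MGSE in $G$, and project. The contradiction when $j>\bar d$ is handled cleanly. The paper itself gives no argument at all here, merely citing \cite[Lemma~4.4]{alars}; your write-up is almost certainly what that reference contains.

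One small bibliographic caveat: the theorem of \cite{lm} you invoke for the non-abelian step is stated for \emph{monolithic} $G$, so the inductive step along a chief series at a non-abelian factor $N_i/N_{i-1}$ with $C_{G/N_{i-1}}(N_i/N_{i-1})\neq 1$ is not literally covered by that reference (and the naive reduction to $G/C$ does not immediately suffice, as diagonal-type supplements show). The general lifting statement is nonetheless standard and follows from the same circle of ideas, so this does not affect the validity of your proof; it is just worth being aware that ``Gasch\"utz plus \cite{lm} plus induction'' hides one extra, well-known wrinkle.
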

	
	\begin{proof}
		See \cite[Lemma 4.4]{alars}.
	\end{proof}
	
	Since $G$ is non-solvable, there exists a normal subgroup $K$ of $G$ such that $G/K$ is a monolithic group with non-abelian socle. Hence, by Lemma \ref{lemma:GN}, we can assume for the remainder of this proof that $G$ is a monolithic group with socle $N = S^n$ for some non-abelian simple group $S$.

    Thus, we will proceed with some results and definitions specific to monolithic groups with non-abelian socle.

    As $N$ is the unique minimal normal subgroup of $G$, we must have $C_G(N) = 1$. As such,
    \[
    G = N_G(N) = \frac{N_G(N)}{C_G(N)} \leq \Aut(S^n) = \Aut(S) \wr S_n.
    \]
    Hence, $G$ is a subgroup of $\Aut(S) \wr S_n$ containing $S^n$. Additionally, if the map $\pi \colon \Aut(S) \wr S_n \to S_n$ is the natural projection map, then $G \pi$ must be a transitive subgroup of $S_n$.

    \begin{definition} \label{def:Hrpartchange}
        Let $S$ be a non-abelian simple group, and $K$ an almost simple group with socle $S$. We say a subgroup $H$ of $K$ is flexible if it has the following three properties:
        \begin{itemize}
            \item $H \cap S \neq S$;
            \item $H S = K$; and
            \item There exists a prime $r$ such that for every $h \in H$ there exists an $s \in H \cap S$ such that $|h|_r \neq |hs|_r$.
        \end{itemize}
    \end{definition}

    An important fact is that for any $S \leq K \leq \Aut(S)$, there always exists a subgroup $H < K$ which is flexible, by \cite[Lemma 6]{nongen}. The existence of these subgroups is important in our proof of Theorem \ref{theorem:main}, as they allow us the freedom we require in order to provide suitable preimages of a generating set of $G/N$. To show this, we must first obtain a suitable subgroup $\tilde H$ of $G$ which inherits these qualities from $H$.

    We will let $S_i$ denote the subgroup of $S^n$ consisting of all elements $x = (x_1, \dots, x_j)$ such that $x_j = 1$ for all $j \neq i$. Then the preimage $\pi^{-1} \left( \Stab_{S_n}(1) \right)$  has a normal subgroup given by
    \[
        (\Aut(S_2) \times \cdots \times \Aut(S_n) : S_{n-1}) \cap G.
    \]
    Let
    \[
        K_1 := \frac{\pi^{-1} \left( \Stab_{S_n}(1) \right)}{(\Aut(S_2) \times \cdots \times \Aut(S_n) : S_{n-1}) \cap G}.
    \]
    and note we can view $K_1$ as a subgroup of $\Aut(S_1)$.

    \begin{definition} \label{def:Htilde}
        Let $G$ be a monolithic group with non-abelian socle, and define $K_1$ as above. By the Embedding Theorem, see \cite[Theorem 3.3]{bl}, there is an injective homomorphism $\phi \colon G \to K_1 \wr S_n$. Then, for any flexible $H \leq K_1$, we let
        \[
            \tilde{H} := \phi^{-1} \left( \{ (\alpha_1, \dots, \alpha_n) \sigma \in G : \alpha_i \in H \} \right).
        \]
    \end{definition}

	\begin{lemma} \label{lemma:constructiveargument}
		Let $G$ be a monolithic group with non-abelian socle $N$, and take $g_1, \dots, g_{d(G)} \in G$ such that
		\[
		\langle g_1, \dots, g_{d(G)} \rangle N = G/N.
		\]
		
		If one of the following holds:
		\begin{itemize}
			\item $g_1 \pi$ has a fixed point,
			\item $g_2 \pi$ has a fixed point, or
			\item $(g_1^{-1} g_2^{i}) \pi$ and $(g_2^{-1} g_1^{i}) \pi$ are fixed point free for all $i$,
		\end{itemize}
		then there exist $v_1, v_2 \in N$ such that
		\[
		\langle v_1 g_1, v_2 g_2, g_3 \dots, g_{d(G)} \rangle = G.
		\]
		Moreover, for any flexible subgroup $H \leq K_1$, we can choose $v_1$ such that $g_1 v_1 \in \tilde{H}$, with $\tilde{H}$ as defined in Definition \ref{def:Htilde}.
	\end{lemma}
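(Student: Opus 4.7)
The plan is to follow the proof of the main theorem of \cite{lm} almost verbatim, tracking at each step that the element $v_1$ constructed there can be forced to lie in a prescribed coset of $N$, namely the coset making $g_1 v_1 \in \tilde H$. The role of the flexibility of $H$ is to provide exactly the extra freedom needed for this additional constraint to be compatible with the rest of the argument.

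I would embed $G$ into $K_1 \wr S_n$ via the map $\phi$ of Definition \ref{def:Htilde} and write each $g_i$ as $(a_{i,1}, \ldots, a_{i,n})\sigma_i$ with $\sigma_i = g_i \pi$. For $v_1 = (s_1, \ldots, s_n) \in N = S^n$, the condition $g_1 v_1 \in \tilde H$ translates to a coordinatewise condition on the $s_j$, and by $HS = K_1$, the second bullet of Definition \ref{def:Hrpartchange}, the set of admissible $v_1$ is a non-empty coset $v_1^{(0)}(H \cap S)^n$ in $N$. Let $H^\ast = \langle v_1 g_1, v_2 g_2, g_3, \ldots, g_{d(G)}\rangle$ and $L = H^\ast \cap N$. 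Since $H^\ast$ maps onto $G/N$ by hypothesis, it suffices to show $L = N$; because $L$ is $H^\ast$-invariant and the $g_i$ act transitively on the simple direct factors of $N$, it suffices in turn to produce in $L$ an element projecting surjectively onto some $S_i$.

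The rest of the argument is the case analysis of \cite{lm} according to the permutation structure of $\sigma_1$ and $\sigma_2$. When $\sigma_1$ fixes some $i_0$, the $i_0$-entry of $v_1 g_1$ is a single $K_1$-element over which one retains the full $H \cap S$-freedom inside the admissible coset, and the commutator construction of \cite{lm} then produces the required element of $L$. The case where $\sigma_2$ has a fixed point is symmetric and in fact easier, because $v_2$ ranges freely over all of $N$. In the third case, neither $\sigma_1$ nor $\sigma_2$ has a fixed point, but the fixed-point-freeness of every $(g_1^{-1} g_2^i)\pi$ and $(g_2^{-1} g_1^i)\pi$ excludes the degenerate cycle interactions that would otherwise obstruct the construction; here one invokes the prime $r$ from the third bullet of Definition \ref{def:Hrpartchange}, so that whatever $H$-value is forced on a constrained coordinate, some $s \in H \cap S$ perturbs its $r$-part, and this order-based separation replaces the unrestricted freedom used in the original argument. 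The main obstacle, and the reason the proof is long, is the bookkeeping needed to verify case by case that every local choice of $v_1$ made in \cite{lm} is still available inside the coset $v_1^{(0)}(H \cap S)^n$; the three properties in Definition \ref{def:Hrpartchange} are precisely tailored to make this possible.
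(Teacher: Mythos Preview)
Your overall strategy---follow \cite{lm} and track the $\tilde H$-constraint on $v_1$ using flexibility---is exactly the paper's approach. But your sketch contains a genuine gap in the reduction step. You write that since $L = H^\ast \cap N$ is $H^\ast$-invariant and the action on the simple factors is transitive, ``it suffices in turn to produce in $L$ an element projecting surjectively onto some $S_i$.'' This is not sufficient: obtaining that $L$ projects onto each $S_i$ only shows that $L$ is subdirect, i.e.\ $L = \prod_{B \in \Phi} D_B$ for some $G\pi$-invariant partition $\Phi$ into diagonal blocks. The entire substance of the argument, both in \cite{lm} and in the paper, lies in forcing $|B| = 1$. The paper does this via a ``good pair'' condition: besides $\langle a_1, b_1 \rangle \geq S$, one requires that certain powers $a_i^{|\rho_1\cdots\rho_i|/|\rho_i|}$ and $a_1^{|\rho_1\cdots\rho_i|/|\rho_1|}$ be non-conjugate in $\Aut(S)$ for every $i \geq 2$, and it is this non-conjugacy that pins the block containing $m$ inside $\supp(\rho_1)$. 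You never mention this condition, and your description of the fixed-point case (``the commutator construction of \cite{lm} then produces the required element of $L$'') skips over it entirely.

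Relatedly, your account of where the prime $r$ enters is off. It is not reserved for the third case: the flexibility of $H$ is invoked already when building a good pair with $x \in \tilde H$, since for each $i \geq 2$ one needs $s_i \in H \cap S$ with $|s_i a_i|_r \neq |a_i|_r$ to force the non-conjugacy above. This is used in all three cases. The extra work in the no-fixed-point case is not another $r$-part separation but a sequence of modifications to the coordinates of $y$ outside $\supp(\sigma_1)$, arranged so that $\langle xg_1, yg_2\rangle$ cannot normalise $D_{B\rho}$ unless $|B| = 1$; since only $v_2$ is touched there, the $\tilde H$-constraint on $v_1$ is already secured and plays no further role.
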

	
	If we assume the statement of this lemma, then we may prove the main theorem of this paper.
	
	\begin{proof}[Proof of Theorem \ref{theorem:main}]
		Let $G$ be a non-solvable group, and assume $G$ has the MGSE property. By Lemma \ref{lemma:GN}, we can assume that $G$ is a monolithic group with socle $N = S^n$, for some non-abelian simple group $S$.

        Let $J \leq \Aut(S)$ be a subgroup as given in \cite[Lemma 6]{nongen}, which is flexible in $\Aut(S)$. Thus, $H := J \cap K_1$ is flexible in $K_1$. Hence, we may define $\tilde H$ as in Definition \ref{def:Htilde}.
		
		Let $d := d(G)$, and choose $g_1, \dots, g_d \in \tilde H$ such that
		\[
		\langle g_1, \dots, g_d \rangle N = G/N.
		\]
		This is possible since $HS = K_1$, which implies that $\tilde H N = G$.
		
		By \cite{lm}, $d(G) = \max(2, d(G/N))$. If $d(G/N) = 1$, then we choose $g_1$ such that $g_1 N$ is a generator of $G/N$, and $g_2 = 1$. Hence, we have $\langle g_2, \dots, g_d \rangle N < G$. On the other hand, if $d(G/N) > 1$, then $d = d(G/N)$, and so we also have $\langle g_2, \dots, g_d \rangle N < G$.
		
		In order to apply Lemma \ref{lemma:constructiveargument}, we must satisfy one of the three conditions in the statement. Suppose that $(g_1^{-1} g_2^i) \pi$ has a fixed point for some $i$. Then, we can instead consider the generating set
		\[
		\langle g_1^{-1} g_2^i, g_2, \dots, g_d \rangle N = G/N,
		\]
		and proceed with applying Lemma \ref{lemma:constructiveargument} now. Note, $g_1^{-1} g_2^i \in \tilde{H}$ still. We can similarly replace $g_2$ by $g_2^{-1} g_1^i$ instead if this element has a fixed point for some $i$.
        
        Additionally, we still have the property that $\langle g_2, \dots, g_d \rangle N < G$. If $d \geq 2$ then this is clear by $d(G) = \max(2, d(G/N))$. Otherwise, if $d(G/N) = 1$, then $g_1 \pi$ must be an $n$-cycle. Hence, $g_1^{-1} g_2^i$, is always fixed-point free, and $g_2^{-1} g_1^i$ is fixed point free, unless it is equal to 1.

        Hence, we can assume we have chosen $g_1, \dots, g_d \in \tilde{H}$ such that
        \[
            \langle g_1, \dots, g_d \rangle N = G,
        \]
        with $g_1, g_2$ satisfying one of the conditions of Lemma \ref{lemma:constructiveargument}, and $\langle g_2, \dots, g_d \rangle N < G$.
		
		Thus, we can find $v_1, v_2 \in N$ and $w_1, w_2 \in N$ such that
		\[
		\langle v_1 g_1, v_2 g_2, g_3, \dots, g_d \rangle = \langle w_1 g_1, w_2 g_2, g_3, \dots, g_d \rangle = G,
		\]
		and $v_2 g_2, w_1 g_1 \in \tilde{H}$. Since $G$ has the MGSE property, there exists a $j$ such that 
		\[
		\langle w_j g_j, v_2 g_2, g_3, \dots, g_d \rangle = G,
		\]
		where $w_j := 1$ for $j \geq 3$. If $j > 1$, then this implies that $\langle g_2, \dots, g_d \rangle N = G/N$, contradicting our choice of $g_i$ above. If $j=1$ instead, then
		\[
		\langle w_1 g_1, v_2 g_2, g_3, \dots, g_d \rangle \leq \tilde{H} < G,
		\]
		again giving a contradiction. Hence, $G$ cannot have the MGSE property.
	\end{proof}

    For the proof of Lemma \ref{lemma:constructiveargument}, the following definition is useful.

    \begin{definition} \label{def:quasiordering}
        Let $r$ be a prime, and $\sigma_1, \sigma_2 \in S_n$. We define a semi-ordering on $S_n$ given by: $\sigma_1 \leq \sigma_2$ if and only if $|\sigma_1|_r < |\sigma_2|_r$ or $|\sigma_1|_r = |\sigma_2|_r$ and $|\sigma_1| \leq |\sigma_2|$.
    \end{definition}

    We will now prove Lemma \ref{lemma:constructiveargument}. As mentioned in the introduction, a version of this lemma without the extra condition that $g_1 v_1 \in \tilde{H}$ is proved in \cite{lm}. The proof of Lemma \ref{lemma:constructiveargument} follows in nearly the exact same way, due to the argument in \cite{lm} providing enough freedom in the choice of $v_1, v_2$ to ensure $g_1 v_1 \in \tilde H$.

	\begin{proof}[Proof of Lemma \ref{lemma:constructiveargument}]
        Let $G$ be a monolithic group with socle $N = S^n$, for some non-abelian simple group $S$.
    
		Define $S \leq K_1 \leq \Aut(S) $ for $G$ as in the argument before Definition \ref{def:Htilde}, and $\phi$ the injective map $\phi \colon G \to K_1 \wr S_n$ given by \cite[Theorem 3.3]{bl}. Let $H \leq K_1$ be a flexible subgroup for some prime $r$, and
        \[
            \tilde H := \phi^{-1} \left( \{(\alpha_1, \dots, \alpha_n) \sigma \in G : \alpha_i \in H \} \right) < G
        \]
        as in Definition \ref{def:Htilde}. We will use the quasi-ordering defined on $S_n$ in Definition \ref{def:quasiordering}, with $r$ as the prime in the definition.

        We note that in order to prove Lemma \ref{lemma:constructiveargument}, it is sufficient to prove that there exist $v_1, v_2 \in N^\phi = S^n$ such that
        \[
            \langle v_1 g_1, v_2 g_2, g_3, \dots, g_{d(G)} \rangle = G^\phi
        \]
        with $v_1 g_1 \in \tilde{H}^\phi$. Hence, we will assume that $N \leq G \leq K_1 \wr S_n$, and that
        \[
            \tilde H = \{(\alpha_1, \dots, \alpha_n) \sigma \in G : \alpha_i \in H \}.
        \]
		
		As previously, we will let $S_i$ denote the subgroup of $S^n$ consisting of all elements $x = (x_1, \dots, x_j)$ such that $x_j = 1$ for all $j \neq i$. Let $\pi \colon K_1 \wr S_n \to S_n$ denote the natural projection map, and $\pi_i \colon K_1^n \to K_1$ the projection maps from the base group to the $i$th coordinate.
		
		Let $g_1, \dots, g_d$ be as in the statement of the lemma, with $d = d(G)$. Since $\tilde{H}$ projects onto $G/N$, we can assume that $g_i \in \tilde{H}$ for all $i$. Note, we can let $g_1 = \alpha \rho$ and $g_2 = \beta \sigma$, where $\alpha, \beta \in K_1^n$, and $\rho, \sigma \in S_n$. We will choose elements $x, y \in S^n$, and will consider the subgroup $\langle x g_1, y g_2, g_3, \dots, g_d \rangle \leq G$. Our goal is to ensure this subgroup is the entirety of $G$ for some $x \in \tilde{H}$. Let $x = (x_1, \dots, x_n)$ and $y = (y_1, \dots, y_n)$.
		
		We can write $\rho = \rho_1 \cdots \rho_{s(\rho)}$ as a product of disjoint cycles, including possible cycles of length 1, such that
		\[
		\rho_1 \leq \cdots \leq \rho_{s(\rho)}.
		\]
		Note, $\rho$ has a fixed point point if and only if $|\rho_1| = 1$. 
		
		We similarly write $\sigma = \sigma_1 \cdots \sigma_q \cdots \sigma_{s(\sigma)}$ as a product of disjoint cycles such that:
		\begin{itemize}
			\item $\supp(\rho_1) \cap \supp(\sigma_i) \neq 1$ if and only if $i \leq q$;
			\item $\sigma_1 \leq \cdots \leq \sigma_q$.
		\end{itemize}
		
		For each $i$, let
		\[
		\rho_i := (m_{i,1}, \dots, m_{i, |\rho_i|}),
		\]
		and set $m_{1,1} =: m$. Additionally, define $\bar{\alpha}_i := x_i \alpha_i$, and
		\[
		a_i = \bar{\alpha}_{m_{i,1}} \cdots \bar{\alpha}_{m_{i, |\rho_i|}},
		\]
		which is the $m_{i,1}$th entry of $(x g_1)^{|\rho_i|}$. Let $a := a_1$.
		
		There exists a $\sigma_j$ with $j \leq q$ such that $m$ is in the support of $\sigma_j$. As for $\alpha$, we fix $\sigma_i = (n_{i,1}, \dots, n_{i, l_i})$, and
		\[
		\bar \beta_i := y_i \beta_i, \qquad b_i := \bar \beta_{n_{i,1}} \cdots \bar \beta_{n_{i,l_i}},
		\]
		for $2 \leq i \leq q$. Let $b := b_1$.
		
		We say $x, y \in S^n$ form a good pair if the following two conditions hold:
		\begin{enumerate}
			\item $\langle a, b \rangle \geq S$; and
			\item For any $2 \leq i \leq s(\rho)$, $a_i^{|\rho_1 \cdots \rho_i|/{|\rho_i|}}$ is not conjugate to $a_1^{|\rho_1 \cdots \rho_i|/{|\rho_1|}}$ in $\Aut(S)$.
		\end{enumerate}
		
		We will prove that for each good pair $x, y$, there exists a partition $\Phi$ of $\{1, \dots, n\}$ fixed by $g_1, \dots, g_d$ such that 
		\[
		\langle x g_1, y g_2, g_3 \dots, g_d \rangle \cap S^n = \prod_{B \in \Phi} D_B
		\]
		where for each $B \in \Phi$, $D_B$ is the full diagonal subgroup of $\prod_{i \in B} S_i$. Moreover, if $B_1$ is the block of $\Phi$ containing $m$, then $B_1 \subseteq \supp(\rho_1) $. Hence, if $\rho$ has a fixed point, then we're done, given we can find a good pair $x, y$ with $x \in \tilde H$.
		
		Let
		\[
		K := \langle x g_1, y g_2, g_3 \dots, g_d \rangle.
		\]
		
		The key thing to note is that $(x g_1)^{|\rho_1|}, (y g_2)^{|\sigma_j|}$ normalise $S_m$. Additionally, their $m$th coordinates are $a, b$ respectively, by definition. Hence, $(K \cap K_1^n)\pi_m$ must be normal in $\langle a, b \rangle \geq S$. Thus, it must be either trivial, or contain $S$.
		
		However, this projection cannot be trivial. Indeed, we can consider $(xg_1)^{|\rho|} \in K \cap K_1^n$. Firstly, if $\rho$ is an $n$-cycle, then $(xg_1)^{|\rho|} \pi_{m} = a_1$. Since $\langle a, b \rangle \geq S$, we must have $a_1 \neq 1$. Thus, $(K \cap K_1^n) \pi_m \neq 1$.
        
        If instead $\rho$ is not an $n$-cycle, then $s(\rho) > 1$, and
		\[
		(xg_1)^{|\rho|} \pi_{m} = a_1^{|\rho|/|\rho_1|}, \qquad (xg_1)^{|\rho|} \pi_{m_{s(\rho)}} = a_{s(\rho)} ^{|\rho|/|\rho_{s(\rho)}|}.
		\]
        Since $a_1^{|\rho|/|\rho_1|}, a_{s(\rho)} ^{|\rho|/|\rho_{s(\rho)}|}$ are not conjugate, we can immediately exclude the possibility that $(xg_1)^{|\rho|} = 1$. Therefore, $(xg_1)^{|\rho|} \pi_k \neq 1$ for some $k$, and so the same is true for every $k \in \{1, \dots, n \}$, since $G \pi$ is a transitive subgroup of $S_n$.
		
		Hence, the projection $(K \cap K_1^n) \pi_m$ must contain $S$. Thus, by the transitivity of $G \pi$, for each $i \in \{1, \dots, n\}$ we have $(K \cap K_1^n) \pi_i \geq S$. Since $\Aut(S)/S$ is solvable, this implies that $(K \cap S^n ) \pi_i = S$ for every $i$. Therefore,
        \[
            K \cap S^n = \prod_{B \in \Phi} D_B,
        \]
        for some partition $\Phi$ of $\{1, \dots, n\}$.
		
		Finally, if $B$ is the block containing $m$, then we will show that $B \subseteq \supp(\rho_1)$. Suppose for a contradiction that there exists some $h \in B$ which is not in $\supp(\rho_1)$. Let $h = m_{j,t} \in \supp(\rho_j)$. We may assume that
		\[
		D_B = \{ (x, x^{\phi_h}, \dots) : x \in S \} \leq S_m \times S_h \times \cdots.
		\]
		for some $\phi_h \in \Aut(S)$. Since $(xg_1)^{|\rho_1 \cdots \rho_h|} \pi$ stabilises $m$ and $h$, this implies $(xg_1)^{|\rho_1 \cdots \rho_h|}$ normalises $D_B$. However,
		\[
		(x, x^{\phi_h}, \dots)^{(xg_1)^{|\rho_1 \cdots \rho_h|}} = (x^{\lambda_m}, x^{\phi_h \lambda_h}, \dots),
		\]
		where
		\[
		\lambda_m = a_1 ^{|\rho_1 \cdots \rho_h|/|\rho_1|},
		\]
		and
		\begin{align*}
			\lambda_h & = (\bar \alpha_{m_{j, t}} \cdots \bar \alpha_{m_{j, k_j}} \bar \alpha_{m_{j,1}} \cdots \bar \alpha_{m_{j, t-1}})^{|\rho_1 \cdots \rho_h|/|\rho_h|}\\
			& = (\bar \alpha_{m_{j,1}} \cdots \bar \alpha_{m_{j, t-1}})^{-1} a_j ^{|\rho_1 \cdots \rho_h|/|\rho_h|} (\bar \alpha_{m_{j,1}} \cdots \bar \alpha_{m_{j, t-1}}).
		\end{align*}
		Hence, since $(xg_1)^{|\rho_1 \cdots \rho_h|}$ normalises $D_B$, we must have that
		\[
		\phi_h^{-1} a_1 ^{|\rho_1 \cdots \rho_h|/|\rho_1|} \phi_h = (\bar \alpha_{m_{j,1}} \cdots \bar \alpha_{m_{j, t-1}})^{-1} a_j ^{|\rho_1 \cdots \rho_h|/|\rho_h|} (\bar \alpha_{m_{j,1}} \cdots \bar \alpha_{m_{j, t-1}}),
		\]
		contradicting (2).
		
		Thus, we have shown the desired result. We will now show we can always find a good pair $x, y \in S^n$ such that $x \in \tilde{H}$. Consider $x, y \in S^n$, with $x \in \tilde{H}$. We will modify these to obtain a good pair.
		
		By \cite[Lemma 7]{nongen}, we can always find $g \in S \cap H, h \in S$ such that $\langle g a, h b \rangle \geq S$. Hence, substituting $x_m$ for $g x_m$ and $y_m$ for $h y_m$ is sufficient to satisfy (1), and we will have $x \in \tilde{H}$.
		
		Next, by \cite[Lemma 6]{nongen}, since $|\rho_1 \cdots \rho_i|/|\rho_i|$ is coprime to $r$, there exists an $s_i \in S \cap H$ such that $(s_i a_i)^{|\rho_1 \cdots \rho_i|/|\rho_i|}$ is not conjugate to $a_i ^{|\rho_1 \cdots \rho_i|/|\rho_i|}$. Thus, $a_1^{|\rho_1 \cdots \rho_i|/|\rho_1|}$ will not be conjugate to one of these two elements, and hence we can choose an $s_i \in S \cap H$ such that $(s_i a_i)^{|\rho_1 \cdots \rho_i|/|\rho_i|}$ is not conjugate to $a_1^{|\rho_1 \cdots \rho_i|/|\rho_1|}$. Thus, if we substitute $x_{m_{i,1}}$ with $s_i x_{m_{i-1}}$, we will obtain that
        \[
            a_i^{|\rho_1 \cdots \rho_i|/|\rho_i|} \text{ is not conjugate to } a_1^{|\rho_1 \cdots \rho_i|/|\rho_1|}.
        \]

        We repeat this for each $i > 1$, noting that at each step we still have $x \in \tilde H$.

        Hence, we can always find a good pair $x,y$ with $x \in \tilde H$, and so we may assume that
        \[
            K \cap S^n = \langle x g_1, y g_2, g_3 , \dots, g_d \rangle \cap S^n = \prod_{B \in \Phi} D_B
        \]
        for some partition $\Phi$ such that the block containing $m$ is contained in $\supp(\rho_i)$.
		
		Hence, if $\rho$ has a fixed point, then we're done, since $B \subseteq \supp(\rho_1) = \{m\}$, which implies $K \cap S^n = S^n$ and thus $K = G$. If instead $\sigma$ had a fixed point, then we could've repeated the same process with $(g_2, g_1, g_3, \dots, g_d)$ instead. However, if we repeated the argument in the exact same manner, we would obtain $x g_2 \in \tilde{H}$, whereas we would like $y g_1 \in \tilde{H}$. This is easily remedied by noting that we could choose either $g$ or $h$ to be contained in $H \cap S$ in our application of \cite[Lemma 7]{nongen}, and hence the opposite choice from above would mean $y g_1 \in \tilde{H}$ as required.

        Thus, we may now assume that $(g_1^{-1} g_2^i) \pi$ and $(g_2^{-1} g_1^i) \pi$ are both fixed point free. In the remainder of this proof, we will only modify $y$. We will first show that we can change $y$ in such a way that $B \subseteq \supp(\rho_1) \cap \supp(\sigma_1)$. Then, we will show that for any such block, $D_B$ and $D_{B \rho}$ are entirely determined. Finally, we will show we can modify $y$ so that $x g_1$ and $y g_2$ normalise $D_{B \rho}$ if and only if $|B| = 1$. This will hence prove the result.

        Firstly, as with $\alpha$, we can modify $y \in N$ such that
        \[
            b_i^{|\sigma_1 \cdots \sigma_i|/|\sigma_i|} \text{ is not conjugate to } b_1^{|\sigma_1 \cdots \sigma_i|/|\sigma_1|} \qquad \forall 2 \leq i \leq q,
        \]
        without changing the value of $b_1$.
		
		The proof that $B \subseteq \supp(\sigma_1)$ now proceeds similarly to the proof that $B \subseteq \supp (\rho_1)$, since
		\[
		B \subseteq \supp(\rho_1) \subseteq \supp(\sigma_1) \cup \cdots \supp(\sigma_q).
		\]
		Indeed, if $h \in B \setminus \supp(\sigma_1)$, then $h \in \supp(\sigma_i)$ for $2 \leq i \leq q$. Thus, as before, this would imply that $b_1^{|\sigma_1 \cdots \sigma_i|/|\sigma_1|}$ is conjugate to $b_i^{|\sigma_1 \cdots \sigma_i|/|\sigma_i|}$, leading to a contradiction.
		
		A consequence of this is that $B \rho \cap \supp(\sigma_1) = \emptyset$. Suppose $h \in B \rho \cap \supp(\sigma_1) $, so $h = j \rho$ for for $j \in B \subseteq \supp(\sigma_1)$. Hence, there exists an $i$ such that $j \rho = h = j \sigma_1^i = j \sigma^i$. Thus, $j$ is a fixed point of $\rho^{-1} \sigma^i$, contradicting our assumption.
		
		In particular, this implies that $B \cap B \rho = \emptyset$.
		
		By the results above, $|B| = c$ is a divisor of $k = |\rho_1|$. Hence,
		\[
			B = \{ m_1, m_{k/c + 1}, \dots, m_{k(c-1)/c + 1} \}
		\]
		which is the orbit of $m_1$ under $\rho_1^{k/c}$. Additionally, we will let
		\[
			D_B = \{ (x, x^{\phi_2}, \dots, x^{\phi_c}) : x \in S \}
		\]
		for some $\phi_i \in \Aut(S)$.

        We will now show that for any $B \subseteq \supp(\rho_1) \cap \supp(\sigma_1)$, the subgroup $D_B$ is uniquely defined by certain $x_i, y_i, \alpha_i, \beta_i$.
		
		Let $t_i := k(i-1)/c + 1$. Since $m_{t_i} \in B \subseteq \supp(\sigma_1)$, there exist some $1 \leq u_i \leq |\sigma_1| = l$ such that $m_{t_i} = n_{u_i}$.
		
		The elements $g_1^{|\rho_1|}$ and $g_2^{|\sigma_1|}$ normalise $D_B$, and $S_{m_{t_i}}$ for all $i$. Hence, there exist $\lambda_i, \mu_i$ such that
		\begin{align*}
			(x, x^{\phi_2}, \dots, x^{\phi_c})^{g_1^{|\rho_1|}} & = (x^{\lambda_1}, x^{\phi_2 \lambda_2}, \dots, x^{\phi_c \lambda_c})\\
			(x, x^{\phi_2}, \dots, x^{\phi_c})^{g_2^{|\sigma_1|}} & = (x^{\mu_1}, x^{\phi_2 \mu_2}, \dots, x^{\phi_c \mu_c}),
		\end{align*}
		which are both in $D_B$. As such,
		\begin{align*}
			\lambda_i & = \phi_i^{-1} \lambda_1 \phi_i = a_1^{\phi_i}\\
			\mu_i & = \phi_i^{-1} \mu_1 \phi_i = b_1^{\phi_i}.
		\end{align*}
		Thus, if $\lambda_i$, $\mu_i$ are fixed, then $\phi_i$ are also fixed. Indeed, as $\langle a_1, b_1 \rangle \geq S$, there can exist at most 1 automorphism of $S$ such that the above holds.
		
		However, we have
		\begin{align*}
			\lambda_i & = \prod_{t_i \leq j \leq k} \bar \alpha_{m_j} \prod_{1 \leq j \leq t_i-1} \bar \alpha_{m_j}\\
			\mu_i & = \prod_{u_i \leq j \leq k} \bar \beta_{n_j} \prod_{1 \leq j \leq u_i-1} \bar \beta_{n_j}.
		\end{align*}
		As such, $\lambda_i, \mu_i$ are determined by the value of $x_i, \alpha_i, y_j, \beta_j$ for $i \in \supp(\rho_1)$ and $j \in \supp(\sigma_1)$. For the remainder of this proof, we will not change the value of these elements. Hence, we can assume that for each choice of $B \subseteq \supp(\rho_1) \cap \supp(\sigma_1)$, there exists a unique $D_B$ normalised by $\langle x g_1, y g_2 \rangle$.

        As an immediate consequence, since
		\[
			D_{B \rho} = D_B^{g_1},
		\]
		the diagonal group $D_{B \rho}$ is also uniquely determined for the remainder of this proof. We have $B \rho = \{m_2, \dots, m_{t_c + 1}\}$, and will let
		\[
			D_{B \rho} = \{ (x, x^{\phi_2^*}, \dots, x^{\phi_c^*}) : x \in S\}.
		\]
		
		We will finally modify $y_i$ for $i \not\in \supp(\sigma_1)$ in order to prove that $\langle x g_1, y g_2 \rangle$ stabilises $\prod_{B \in \Phi} D_B$ if and only if it is equal to $S^n$.

        We will consider
		\begin{align*}
			b_{h,s} = \bar \beta_{n_{h,s}} \cdots \bar \beta_{n_{h, l_h}} \bar \beta_{n_{h,1}} \cdots \bar \beta_{n_{h,s-1}}.
		\end{align*}
		Note, $b_{h,1} = b_h$, and that $b_{h,s}$ is the $m_{h,s}$th coordinate of $g_2^{|\rho_h|}$.

        Let $i$ be such that $m_2 \in \supp(\sigma_i)$. Note $i \neq 1$, since $B \rho \cap \supp(\sigma_1) = \emptyset$.
        
		We will first consider the $c \mid k$ with $m_{t_c + 1} \in \supp(\sigma_i)$. We know that $g_2^{|\sigma_i|}$ normalises $D_{B \rho}$, and fixes the coordinates $m_2$ and $m_{t_c + 1}$. Since $D_{B \rho}$ is uniquely determined, we can prove that the action of $g_2^{|\sigma_i|}$ on $S_{m_2}$ must determine its action on $S_{m_{t_c} + 1}$. Thus, if $m_2 = n_{i,p}$ and $m_{t_c + 1} = n_{i,q}$, then $b_{i,p}$ determines $b_{i,q}$. However, we can alter some $y_{n_{i,j}}$ for certain $j$ in order to replace $b_{i,q}$ with a conjugate of itself whilst not altering any other $b_{i,j}$, which gives a contradiction.

        Notice that we might exchange $b_i$ for a conjugate of itself in these modifications, but then we will still have $b_i^{|\rho_1 \cdots \rho_i|/|\rho_i|}$ not conjugate to $b_1^{|\rho_1 \cdots \rho_i|/|\rho_1|}$. 

        We do this for each $c \mid k$ such that $m_{t_c + 1} \in \supp(\sigma_i)$. Each $c$ gives rise to a different $m_{t_c+1}$, and thus to a different $q$. Hence, each step does not affect the previous, and we have shown that we can choose $x,y$ such that $\langle x g_1, y g_2 \rangle$ does not normalise $D_{B \rho}$ for any such $c$.

        Finally, we treat $c \mid k$ with $m_{t_c + 1} \in \supp(\sigma_h)$ for some $h \neq i$. Note also that $h \neq 1$, since $m_{t_c + 1} \in B \rho$ has trivial intersection with $\supp(\sigma_1)$. In this case, $g_2^{|\sigma_h|}$ fixes the $m_{t_c + 1}$ coordinate, and thus normalises $D_{B \rho}$. However, it does not fix the $m_2$ coordinate, as in the previous case. Despite this, we still have
        \[
            (x, x^{\phi_2^*}, \dots, x^{\phi_c^*})^{g_2^{|\sigma_h|}} = (x^\gamma, \dots, x^{\phi_c^* b_{h,q}})
        \]
        where $\gamma$ is determined by $\phi_j^*$ and $\bar \beta_s$ for $s \in \supp(\sigma_i)$. Thus, once again $b_{h,q}$ is determined, but we can alter $y$ in order to replace $b_{h,q}$ with a conjugate of itself, while fixing all other $b_{h,j}$, leading to a contradiction.

        We can repeat this, as before, for all such $c \mid k$, and none of these alterations to $y$ will affect previous steps.

        Hence, we must have $m_2 = m_{t_c + 1}$, or equivalently, $|B| = 1$, which proves the result.
	\end{proof}

    As a consequence of the above proof, we obtain the following two corollaries. Given $\bar g_1, \dots, \bar g_d$ which generate $G/N$, these corollaries illustrate the freedom with which one can choose preimages $g_1, \dots, g_d$ generating $G$.

    \begin{corollary}
        Let $G$ be a monolithic group with socle $N$, and suppose $g_1, g_2$ are as in the statement of Lemma \ref{lemma:constructiveargument}. Then there exists some $v_1, v_2 \in N$ such that 
        \[
            \langle v_1 g_1, v_2 g_2, h_3, \dots, h_d \rangle = G
        \]
        for all $h_i$ such that
        \[
            \langle g_1, g_2, h_3, \dots, h_d \rangle N = G.
        \]
    \end{corollary}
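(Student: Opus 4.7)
The plan is to observe that the construction of $v_1, v_2$ in the proof of Lemma \ref{lemma:constructiveargument} depends only on $g_1$ and $g_2$, and not at all on $g_3, \dots, g_d$. Reading through that proof, one sees that the $v_1, v_2$ (called $x, y$ there) are successively adjusted in order to ensure three types of conditions: that $\langle a, b \rangle \geq S$, that certain powers $a_i^{|\rho_1\cdots\rho_i|/|\rho_i|}$ and $b_i^{|\sigma_1\cdots\sigma_i|/|\sigma_i|}$ are non-conjugate to their counterparts $a_1^{|\rho_1\cdots\rho_i|/|\rho_1|}$ and $b_1^{|\sigma_1\cdots\sigma_i|/|\sigma_1|}$ in $\Aut(S)$, and the final modifications to $y$ on the support of the $\sigma_h$ with $h \neq 1$. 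All of these conditions are intrinsic to the pair $(v_1 g_1, v_2 g_2)$.

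Once such $v_1, v_2 \in N$ have been fixed from $g_1, g_2$ alone, I take arbitrary $h_3, \dots, h_d$ satisfying $\langle g_1, g_2, h_3, \dots, h_d \rangle N = G$ and set
\[
    K := \langle v_1 g_1, v_2 g_2, h_3, \dots, h_d \rangle.
\]
Since $v_1, v_2 \in N$, the subgroup $K$ still satisfies $KN = G$, hence $K\pi = G\pi$ is transitive on $\{1, \dots, n\}$. The elements $(v_1 g_1)^{|\rho_1|}$ and $(v_2 g_2)^{|\sigma_j|}$ normalise $S_m$ and have $m$th coordinates $a$ and $b$ respectively, so $(K \cap K_1^n)\pi_m$ is a normal subgroup of $\langle a, b\rangle$ containing $S$; the non-triviality step at the start of the proof of Lemma \ref{lemma:constructiveargument} also uses only $v_1 g_1$. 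By transitivity of $K\pi$ and solvability of $\Aut(S)/S$, we deduce $K \cap S^n = \prod_{B \in \Phi} D_B$ for some partition $\Phi$.

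The remaining arguments of Lemma \ref{lemma:constructiveargument} that force each block $B$ to have size $1$ proceed entirely through the action of $v_1 g_1$ and $v_2 g_2$ on the diagonal subgroups $D_B$: uniqueness of $D_B$ is established through the normalising automorphisms $\lambda_i = a_1^{\phi_i}$ and $\mu_i = b_1^{\phi_i}$, and the final contradiction from $|B| > 1$ is extracted from $b_{i,q}$ and $b_{h,q}$ being simultaneously determined by $D_{B\rho}$ and freely alterable via $y$. None of this invokes $h_3, \dots, h_d$. Consequently $K \cap S^n = S^n$, and combined with $KN = G$ this gives $K = G$.

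The only obstacle is the bookkeeping check that each step in the proof of Lemma \ref{lemma:constructiveargument} really does refer only to $v_1 g_1, v_2 g_2$, but a careful reading confirms this; the $g_3, \dots, g_d$ played no role beyond providing, together with $g_1, g_2$, a generating set of $G/N$, and that role is preserved as long as $\langle g_1, g_2, h_3, \dots, h_d\rangle N = G$.
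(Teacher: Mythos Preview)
Your proposal is correct and matches the paper's approach exactly: the corollary is stated in the paper without proof, simply as ``a consequence of the above proof,'' and your argument spells out precisely that observation---namely, that every modification to $x,y$ and every constraint forcing $|B|=1$ in the proof of Lemma~\ref{lemma:constructiveargument} involves only $v_1g_1$ and $v_2g_2$, with the remaining generators used solely to guarantee $KN=G$ (equivalently, transitivity of $K\pi$). Your bookkeeping is accurate; the only minor imprecision is the phrase ``a normal subgroup of $\langle a,b\rangle$''---strictly it is a subgroup of $K_1$ normalised by $\langle a,b\rangle$---but the conclusion that it contains $S$ is unaffected.
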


    \begin{corollary} \label{cor:alterg2}
        Let $G$ be a monolithic group with socle $N$, and suppose $g_1, \dots, g_d$ are as in Lemma \ref{lemma:constructiveargument}. Let 
        \[
            g_1 = (\alpha_1, \dots, \alpha_n) \rho, \qquad g_2 = (\beta_1, \dots, \beta_n) \sigma.
        \]
        As in the proof of Lemma \ref{lemma:constructiveargument}, define $a_i$ and $b_1$.
        
        Suppose
        \[
            |a_1|_r \neq |a_i|_r
        \]
        for all $i > 1$, and that there exists $s \in S$ such that $\langle a_1, sb_1 \rangle \geq S$. Then there exists $v \in N$ such that
        \[
            \langle g_1, v g_2, \dots, g_d \rangle = G.
        \]
    \end{corollary}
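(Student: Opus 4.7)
The plan is to run the proof of Lemma \ref{lemma:constructiveargument} with the specialization $x = 1$ throughout. That is, we leave $g_1$ unchanged and only modify $g_2$ by an element $v \in N$ (playing the role of $y$ in the lemma). The two hypotheses of the corollary are precisely those that allow the two $x$-modifications of the lemma's proof to be skipped: the first adjustment of $x_m$ needed to force $\langle a,b\rangle \geq S$, and the subsequent adjustments of $x_{m_{i,1}}$ by flexibility elements $s_i$ to force the non-conjugacy condition (2) of a good pair.

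I would first verify that a good pair $(1,v)$ can be produced without touching $g_1$. For condition (1) of the good pair, choosing the $m$th coordinate of $v$ to be $s$ replaces $b = b_1$ by $sb_1$ (up to the cyclic rearrangement used in the lemma), so condition (1) follows directly from the hypothesis $\langle a_1, sb_1 \rangle \geq S$. For condition (2), that $a_i^{M_i}$ is not conjugate to $a_1^{M_1}$ in $\Aut(S)$ for $2 \leq i \leq s(\rho)$ (where $M_j := |\rho_1 \cdots \rho_i|/|\rho_j|$), I use that $M_i$ is coprime to $r$, a consequence of the quasi-ordering of Definition \ref{def:quasiordering}, since $|\rho_i|_r$ is then the maximum of $|\rho_1|_r, \ldots, |\rho_i|_r$. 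Hence $|a_i^{M_i}|_r = |a_i|_r$, and a parallel computation of $|a_1^{M_1}|_r$ together with the hypothesis $|a_1|_r \neq |a_i|_r$ shows the orders of the two powers have distinct $r$-parts, so they cannot be conjugate.

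With the good pair in hand, the remainder of the proof of Lemma \ref{lemma:constructiveargument} applies essentially unchanged: one obtains
\[
\langle g_1, v g_2, g_3, \ldots, g_d \rangle \cap S^n = \prod_{B \in \Phi} D_B,
\]
and the subsequent step that forces each block $B$ to be a singleton modifies only coordinates of $v$ lying outside $\supp(\sigma_1)$, never touching $g_1$. This produces the required $v \in N$ with $\langle g_1, v g_2, g_3, \ldots, g_d \rangle = G$. I expect the main technical subtlety to lie in the second half of the good-pair verification: when the cycles of $\rho$ do not all share the same $r$-part, $M_1$ has a nontrivial $r$-part and the comparison of $|a_1^{M_1}|_r$ with $|a_i|_r$ needs to be traced carefully to confirm that the hypothesis $|a_1|_r \neq |a_i|_r$ is genuinely enough to force non-conjugacy in every case.
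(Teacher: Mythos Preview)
Your approach is exactly the one the paper intends: the corollary is stated without proof, merely ``as a consequence of the above proof,'' and the intended argument is precisely to rerun the proof of Lemma~\ref{lemma:constructiveargument} with $x=1$, using the two extra hypotheses to bypass the two places where $x$ is adjusted. Your handling of condition~(1) of a good pair (replace $y_m$ by $sy_m$) and of the later stages (which touch only~$y$) is correct and matches the lemma verbatim.

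The one point you flag as needing care is genuinely problematic, and your caution is well placed. You correctly observe that $M_i:=|\rho_1\cdots\rho_i|/|\rho_i|$ is coprime to $r$, so $|a_i^{M_i}|_r=|a_i|_r$. But $M_1:=|\rho_1\cdots\rho_i|/|\rho_1|$ has $r$-part $|\rho_i|_r/|\rho_1|_r$, which is nontrivial whenever $|\rho_1|_r<|\rho_i|_r$, and then $|a_1^{M_1}|_r=|a_1|_r/\gcd(|a_1|_r,(M_1)_r)$ can collapse to $|a_i|_r$ even though $|a_1|_r\neq|a_i|_r$. For a concrete instance take $r=2$, $|\rho_1|=1$, $|\rho_i|=2$, $|a_1|_2=4$, $|a_i|_2=2$: then $M_1=2$, $|a_1^{2}|_2=2=|a_i|_2$, and the $r$-part comparison gives nothing. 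One can even arrange $a_1^{2}$ and $a_i$ to be conjugate in $\Aut(S)$ (e.g.\ in $A_6$, $a_1=(1234)(56)$ and $a_i=(12)(34)$), so condition~(2) of a good pair fails outright for $x=1$. Thus the stated hypothesis $|a_1|_r\neq|a_i|_r$ is not, by itself, enough to force condition~(2) via the $r$-part argument. This looks like an imprecision in the corollary's hypotheses rather than a defect in your strategy; the lemma's own proof sidesteps the issue because there one may modify $a_i$ and choose whichever of two $r$-parts avoids $|a_1^{M_1}|_r$, an option unavailable here.
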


    \section{Generators and maximal subgroups}

    In order to prove Theorem \ref{theorem:main}, we proved we can always find a generating set $g_1, \dots, g_{d(G)}$ of $G$ with $g_2, \dots, g_{d(G)}$ contained in a particular proper subgroup of $G$. Thus, we pose the following question: given a maximal subgroup $M$ of a finite group $G$, is it always possible to provide a generating set of minimal size with a given number of elements in $M$?

    \begin{definition}
    For a subgroup $H \leq G$, let
    \[
        \mathscr{D}_H(G) = \max_{\langle g_1, \dots, g_{d(G)} \rangle = G} | \{ g_i : g_i \in H \}|,
    \]
    and
    \[
        \mathscr{D}(G) = \min_{M \mleq G} \mathscr{D}_M(G).
    \]
    \end{definition}

    Hence, we would like to bound $\mathscr{D}(G)$ for any finite group $G$. To do so, we first recall the terminology given in \cite[Definition 1.1.8]{classes}. A primitive group $G$ is said to be
    \begin{enumerate}
    \item a primitive group of type 1 if $G$ has an abelian minimal normal subgroup,
    \item a primitive group of type 2 if $G$ has a unique non-abelian minimal normal
    subgroup,
    \item a primitive group of type 3 if $G$ has two distinct non-abelian minimal
    normal subgroups.
    \end{enumerate}
    
    Let $M$ be a maximal subgroup of a group $G.$ Then $M/M_G$ is
    a core-free maximal subgroup of the quotient group $G/M_G.$ Then $M$ is said to
    be
    \begin{enumerate}
    \item a maximal subgroup of type 1 if $G/M_G$ is a primitive group of type 1,
    \item a maximal subgroup of type 2 if $G/M_G$ is a primitive group of type 2,
    \item a maximal subgroup of type 3 if $G/M_G$ is a primitive group of type 3.
    \end{enumerate}

    We treat each type of maximal subgroup $M$ individually in the following propositions.

\begin{prop}
If $M$ is a maximal subgroup of a finite group $G$ of type 3, then $\mathcal D_M(G)=d(G)-1.$
\end{prop}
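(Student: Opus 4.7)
The inequality $\mathscr{D}_M(G) \leq d(G) - 1$ is immediate since $M$ is a proper subgroup. We aim to produce a generating set of size $d := d(G)$ with $d-1$ elements in $M$ and one outside. Replacing $G$ by $G/M_G$ (and justifying the reduction by standard lifting arguments), we may assume that $G$ is a primitive group of type 3 with two isomorphic non-abelian minimal normal subgroups $N_1, N_2$, and that $M$ is core-free with $MN_i = G$ and $M \cap N_i = 1$ for $i=1,2$. By cases (3) and (4) of the theorem, we may further assume $d(G) = d(G/M_G)$ and that $G/N_1 N_2$ is non-cyclic; each quotient $G/N_i$ is then monolithic with non-abelian socle $N_1 N_2/N_i$, and the projection $G \to G/N_i$ restricts to an isomorphism $M \xrightarrow{\sim} G/N_i$, so $d(M) = d(G/N_i) = d$.

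Fix a minimal generating set $\{m_1^{\ast}, m_2, \ldots, m_d\}$ of $M$ and an element $n \in N_1 \setminus \{1\}$, and set $g_1 := m_1^{\ast} n \in G \setminus M$. Via the isomorphism $M \cong G/N_1$, the tuple $(g_1, m_2, \ldots, m_d)$ generates $G$ modulo $N_1$ (since $g_1 N_1 = m_1^{\ast} N_1$), and it generates modulo $N_1 N_2$ because the images of $m_1^\ast, m_2, \ldots, m_d$ generate $M/\soc(M) \cong G/N_1 N_2$. Hence in the monolithic group $G/N_2$, the tuple $(g_1 N_2, m_2 N_2, \ldots, m_d N_2)$ generates modulo the socle $N_1 N_2/N_2 \cong N_1$. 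Applying the symmetric version of Corollary \ref{cor:alterg2}, obtained by swapping the roles of $g_1$ and $g_2$ so that only the first element is modified (the required hypothesis being arranged by suitable initial choices of $m_1^\ast$, $m_2$, and $n$), there exists $v \in N_1$ such that
\[
\langle v g_1 \cdot N_2,\; m_2 N_2,\; \ldots,\; m_d N_2 \rangle = G/N_2.
\]
Since $v \in N_1$ lies in the kernel of $G \to G/N_1$, the tuple $(vg_1, m_2, \ldots, m_d)$ still generates modulo $N_1$. Set $K := \langle vg_1, m_2, \ldots, m_d \rangle$; then $KN_1 = KN_2 = G$.

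It remains to argue that $K = G$. If $K \neq G$, the minimality of each $N_i$ forces $K \cap N_i = 1$, making $K$ a common complement of $N_1$ and $N_2$ in $G$ — equivalently, the graph of an isomorphism $\psi \colon G/N_1 \to G/N_2$ in the embedding $G \hookrightarrow G/N_1 \times G/N_2$ that differs from the canonical isomorphism $\mu$ defining $M$ (since $vg_1 \notin M$) but agrees with $\mu$ on $\langle m_2 N_1, \ldots, m_d N_1 \rangle$. The main obstacle is to exclude this scenario; we do so by exploiting the freedom in the choices of $n \in N_1\setminus\{1\}$ and $v \in N_1$: the admissible $v$ produced by Corollary \ref{cor:alterg2} form a large subset of $N_1$, while the $v$ for which $K$ sits inside some fixed common complement is constrained by cocycle conditions and is much thinner. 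A generic choice therefore yields $K = G$, producing the desired generating set and establishing $\mathscr{D}_M(G) \geq d(G) - 1$.
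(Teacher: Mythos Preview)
Your overall architecture mirrors the paper's: reduce to $M_G=1$, view $G$ as a fibre product of two copies of a monolithic group $L$ with $M$ the diagonal, take a generating set of $M$, perturb one entry by an element of a minimal normal subgroup, and argue that for some perturbation the resulting set generates $G$. But the final paragraph, which is the heart of the matter, is not a proof.

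Two concrete problems. First, Corollary~\ref{cor:alterg2} asserts the existence of \emph{one} $v\in N$; nothing in its statement (or in Lemma~\ref{lemma:constructiveargument}) says that the set of admissible $v$ is ``large'', so your assertion that these $v$ form a large subset of $N_1$ is unsupported. Second, the claim that the set of $v$ for which $K$ falls into a common complement is ``much thinner'' by unspecified ``cocycle conditions'' is precisely where the work lies, and you have not done it. The paper handles both sides quantitatively: it uses \cite{delu} to show that the set $\Omega=\{n\in N:\langle l_1n,l_2,\dots,l_d\rangle=L\}$ has density at least $53/90$, and then shows that if no $n\in\Omega$ works, then $l_1\Omega\subseteq l_1^{Y}$ with $Y=C_X(l_2,\dots,l_d)$ for $X=C_{\Aut(N)}(L/N)$, forcing $|C_X(l_2)|\geq\tfrac{53}{90}|N|$. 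Refuting this last inequality requires genuine estimates (semiregularity of $X^\pi$, bounds of the form $|C_X(l_2)\cap\Aut(S)^n|\le |S|^{b+a/2}|\Out(S)|$, the inequality $|\Out(S)|\le|S|/30$, index bounds $k\ge\tfrac32|\Out(S)|$, a separate argument when $\sigma=l_2^\pi$ is trivial, and even a computer check for $S=A_6$, $n=1$). None of this can be replaced by the word ``generic''.

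Smaller issues: your appeal to case~(4) of the theorem to dispose of the situation where $G/N_1N_2$ is cyclic is circular here, since in the paper that case is proved only for maximal subgroups of type~2; and your invocation of a ``symmetric version'' of Corollary~\ref{cor:alterg2} with hypotheses ``arranged by suitable initial choices'' needs to be made explicit --- those hypotheses involve $r$-parts of specific cycle products and are not automatic.
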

\begin{proof}
It is not restrictive to assume $M_G=1$. Then there exists a monolithic primitive group $L$ with $N=\soc L$ non-abelian, and we may identify $G$ with the subgroup of $L^2$ consisting of the pairs $(l_1,l_2)$ with $l_1 \equiv l_2 \mod N$ and $M$ with its diagonal subgroup $\{(l,l)\mid l \in L\}.$ 

Assume  $N=S^n,$ $S$ simple non-abelian, $d=d(L)$. Choose $(l_1,\dots,l_d)$ such that $G=\langle l_1,\dots, l_d\rangle N$
and let $$\Lambda=\{(n_1,\dots,n_d)\in N^d\mid \langle l_1n_1,\dots,l_dn_d\rangle=L.$$ By \cite[Theorem 1.1]{delu},
$|\Lambda|\geq \frac{53}{90}|N|^d.$ This implies that we may choose $l_1,\dots,l_d$ with the property that $\langle l_1,\dots,l_d\rangle =L$, and the set
    $$\Omega=\{n \mid \langle l_1n,l_2,\dots,l_d\rangle=L\}$$
    has size at least $\frac{53}{90}|N|$.
    
    For a given $n\in \Omega$ consider the following subset of $G:$
    $$x_1=(l_1,l_1n), x_2=(l_2,l_2),\dots, x_d=(l_d,l_d).$$
    Let $X=C_{\Aut(N)}(L/N)$.  If $\langle x_1,\dots,x_d\rangle \neq L_2,$ then there exists $\gamma \in \Aut(N)$
    such that  $x_1,\dots,x_d$ normalize the diagonal subgroup $\Delta_\gamma=\{(z,z^\gamma)\mid z \in N\}$ of $G.$
 This implies $l_1n=l_1^\gamma,$ $l_2=l_2^\gamma,\dots,l_d=l_d^\gamma.$
    In particular $\gamma \in Y:=C_X(l_2,\dots,l_d).$
     Assume, for a contradiction, that there exists such an automorphism $\gamma$
    for every $n\in \Omega.$ This implies
    $l_1\Omega \subseteq l_1^Y$, and therefore
    $$|C_X(l_2)|\geq |Y|\geq \frac{53}{90}|N|.$$
   We prove that the previous inequality cannot occur.  If follows from the final part of the proof of \cite[Lemma 1]{pr} that
    $X^\pi$ is a semiregular subgroup of $S_n$ and $|\ker \pi \cap X|\leq |S|^n|\Out(S)|.$ Let $\sigma=l_2^\pi$ and $b$ the number of fixed points of $\sigma.$ Write $n=a+b.$
    Since $X^\pi$ is semiregular, $(C_X(l_2))^\pi$ is isomorphic to a semiregular subgroup of $S_a.$ Let $Z=C_X(l_2)\cap N$. By the same argument as in \cite[Lemma 1]{pr}, we can show that,
    \[
        Z \leq \prod_{\Omega \in \{1,\dots, n\}/\langle \sigma \rangle} \diag \left( S^{|\Omega|} \right).
    \]
    Hence,
    \[
        |C_X(l_2) \cap \Aut(S)^n| \leq |S|^{b + \frac{a}{2}} |\Out(S)|.
    \]
    If $a \neq 0$, this implies that
    \begin{align*}
        |C_X(l_2)| & \leq |S|^{b + \frac{a}{2}} |\Out(S)| a\\
        & \leq \frac{a|S|^{b + \frac{a}{2} + 1}}{30},
    \end{align*}
    where the second inequality is by \cite[Lemma 2.2]{q}. We can show this is always strictly less than $|S|^n/2$, contradicting the inequality above.

    So assume $a = 0$, and $l_2 = (z_1, \dots, z_n)$. If $c$ is the number of $i$ with $z_i \neq 1$, then
    \[
        |Z| \leq \frac{|S|^n}{k^c}
    \]
    where $k$ is the smallest index of a proper subgroup of $S.$
    By \cite[Lemma 2.7]{ag}, $k\geq 3\,|\!\Out(S)|/2$
    and therefore, since $c \neq 0,$
    \[
        |C_X(l_2)|\leq \frac{2c |N|}{3k^{c-1}}.
    \]
    However, if $c > 1$, then this contradicts the inequality above. So we may assume $l_2=(z,1,\dots,1).$ If $S\neq A_6$
    we can use the stronger inequality $k\geq 2\,|\!\Out(S)|$ and deduce a contradiction. If $S=A_6$ and $n > 1$, then there exists $s\in S$ with $(|z|,|s|)=1$ and we may replace $l_2$ with $(z,s,\dots,s).$ Otherwise, if $S = A_6$ and $n=1$ then we can check computationally that the result holds.
\end{proof}

\begin{prop}
If $M$ is a maximal subgroup of a finite group $G$ of type 1, then $\mathcal D_M(G)=d(G)-1.$
\end{prop}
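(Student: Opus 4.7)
My plan is to reduce to the case $M_G=1$ and then split into two cases depending on whether $d(M)$ is strictly less than $d(G)$ or equal to it. Standard arguments (analogous to Lemma~\ref{lemma:GN}) give $\mathscr{D}_M(G) \geq \mathscr{D}_{M/M_G}(G/M_G)$, so it suffices to treat $M_G = 1$, in which case $G$ is a primitive group of type 1: the socle $N := \soc G$ is an elementary abelian $p$-group, $N \cap M = 1$, $G = N \rtimes M$, and $N$ is irreducible as an $\mathbb{F}_p M$-module (so the $M$-submodule generated by any nonzero $n \in N$ equals $N$).

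The easy case is $d(M) < d(G)$. Irreducibility ensures that $d(M)$ generators of $M$ together with any nonzero $n \in N$ generate $G$, so $d(G) \leq d(M)+1$, forcing $d(G) = d(M)+1$. This immediately yields a generating set of $G$ of size $d(G)$ with $d(M) = d(G) - 1$ elements in $M$.

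Now assume $d := d(M) = d(G)$. Fix any generating set $m_1, \ldots, m_d$ of $M$; since $d(M) = d$, the subgroup $M_2 := \langle m_2, \ldots, m_d\rangle$ is proper in $M$. For each $n \in N$, the subgroup $\langle n m_1, m_2, \ldots, m_d\rangle$ projects onto $M$ modulo $N$, so its intersection with $N$ is $G$-invariant in $N$ and hence trivial or $N$. In the trivial case the subgroup is a complement of $N$ in $G$, equivalently it equals $M_c := \{c(m)m : m \in M\}$ for a unique cocycle $c \in Z^1(M, N)$ with $c(m_1) = n$ and $c|_{M_2} = 0$. Setting
\[
W := \{c(m_1) : c \in Z^1(M, N),\ c|_{M_2} = 0\} \leq N,
\]
we conclude that $\langle n m_1, m_2, \ldots, m_d\rangle = G$ if and only if $n \notin W$. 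Hence the proposition reduces to exhibiting a generating set of $M$ for which $W$ is a proper $\mathbb{F}_p$-subspace of $N$, after which any $n \in N \setminus W$ provides a generating set of $G$ with $d(G) - 1$ elements in $M$.

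The main obstacle is verifying that such a generating set of $M$ exists. The cocycle relation arising from $m_1^{|m_1|} = 1$ forces $W$ into the kernel of the norm operator $N_{m_1} := \sum_{i=0}^{|m_1|-1} m_1^i$ on $N$. When $N$ is central of prime order, the hypothesis $d(M) = d(G)$ implies the existence of a surjection $M \twoheadrightarrow C_p$, and a Nielsen-move argument lets us pick $m_1$ mapping to a generator of $C_p$; then the relation forces $W = 0 \ne N$. When $N$ is a nontrivial irreducible $M$-module, we choose $m_1$ of order coprime to $p$ with $C_N(m_1) \ne 0$, so that $N_{m_1}$ restricts to multiplication by $|m_1|$ on $C_N(m_1)$, giving $W \subseteq \ker N_{m_1} \subsetneq N$; the delicate point here is the simultaneous requirement that $m_1$ can be completed to a generating set of $M$ of size $d$, which is handled by Nielsen transformations together with the flexibility afforded by $d(M) = d$. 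Once $W \subsetneq N$ is secured, we take $n \in N \setminus W$ to obtain the desired generating set, yielding $\mathscr{D}_M(G) \geq d(G) - 1$; the reverse inequality is immediate since $M$ is a proper subgroup of $G$.
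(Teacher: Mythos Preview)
Your cocycle framework is correct and elegant: reducing to $M_G=1$, observing that $\langle nm_1,m_2,\dots,m_d\rangle$ either equals $G$ or is a complement of $N$, and identifying the obstruction set $W$ with the values $c(m_1)$ of cocycles vanishing on $M_2$. The target $W\subsetneq N$ is exactly right. However, your mechanism for forcing $W\subsetneq N$ via the norm map $N_{m_1}$ has a genuine gap. You assert that one can choose $m_1$ of order coprime to $p$ with $C_N(m_1)\neq 0$ and still complete $m_1$ to a minimal generating set of $M$, but you do not prove this, and in fact it is false in general. Take $M=\SL_2(p)$ acting naturally on $N=\mathbb{F}_p^2$ (with $p\geq 5$): an element of $\SL_2(p)$ has a nonzero fixed vector if and only if $1$ is an eigenvalue, which forces both eigenvalues to be $1$, so the element is unipotent and hence has order $p$ (or is the identity). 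Thus no nonidentity $m_1$ satisfies your two conditions simultaneously, for any generating set whatsoever. In this example one still has $d(G)=d(M)=2$ and $W\subsetneq N$, but your argument cannot detect it.

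The correct argument---which is the content of the result the paper cites from \cite{lmsolvable}---bypasses the norm map entirely. For each index $j$ set $W_j=\{c(m_j):c\in Z^1(M,N),\ c(m_i)=0\text{ for }i\neq j\}$. If $W_j=N$ for every $j$, then the image of the injection $Z^1(M,N)\hookrightarrow N^d$, $c\mapsto(c(m_1),\dots,c(m_d))$, contains each coordinate axis and hence equals $N^d$. But that image is precisely the set of tuples $(n_1,\dots,n_d)$ for which $\langle m_1n_1,\dots,m_dn_d\rangle$ lies in a complement of $N$, so every lift of $(m_1,\dots,m_d)$ would fail to generate $G$. This contradicts the generalized Gaschütz lemma applied with $d=d(G)$. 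Hence some $W_j$ is a proper subspace of $N$, and any $n\in N\setminus W_j$ gives the required generating set. Note that this argument shows you need not modify the generating set of $M$ at all: for \emph{any} $m_1,\dots,m_d$ generating $M$, some index $j$ works. Your norm-map bound $W\subseteq\ker N_{m_1}$ uses only the single relator $m_1^{|m_1|}=1$ and discards all relations involving the other generators; this is why it is too weak in examples like $\SL_2(p)$.
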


\begin{proof}We may assume $M_G=1.$ Let $N=\soc G$ and $d=d(G).$
Choose $m_1,\dots,m_d$ such that $M=\langle m_1,\dots,m_d\rangle.$ By \cite[Lemma 1]{lmsolvable}, there exist $1\leq j\leq d$ and $n\in N$ such that $G=\langle m_1,\dots,m_{j-1},m_jn,m_{j+1},\dots,m_d\rangle.$
\end{proof}

\begin{prop}\label{prop:type2maximalbound}
    If $M$ is a maximal subgroup of a finite group $G$ of type 2, then $\mathscr{D}_M(G) \geq d(G) - 2$.
\end{prop}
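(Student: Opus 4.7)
The plan is to mirror the approach of the proof of Theorem \ref{theorem:main}, letting the maximal subgroup $M$ play the role of the flexible subgroup $\tilde H$. As in the earlier propositions of this section, I would first reduce to the case $M_G = 1$. Under this assumption $G$ is a primitive group of type 2 with unique non-abelian minimal normal subgroup $N = \soc G$, and $M$ is a core-free maximal subgroup. Since $M_G = 1$ forces $N \not\leq M$, maximality gives $MN = G$, so that the natural map $M/(M \cap N) \to G/N$ is an isomorphism.

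Set $d := d(G)$. Since $d \geq d(G/N)$, I can choose $m_1, \dots, m_{d(G/N)} \in M$ projecting to a minimal generating set of $G/N$ via the isomorphism above, and pad with $m_i = 1 \in M$ for $i > d(G/N)$, obtaining $m_1, \dots, m_d \in M$ with $\langle m_1, \dots, m_d \rangle N = G$. Following the argument in the proof of Theorem \ref{theorem:main}, I would then ensure the hypothesis of Lemma \ref{lemma:constructiveargument} by the replacement trick: if some $(m_1^{-1} m_2^i)\pi$ has a fixed point, replace $m_1$ with $m_1^{-1} m_2^i$, which still lies in $M$ and still combines with the other $m_j$ to generate $G/N$; handle $(m_2^{-1} m_1^i)\pi$ symmetrically; otherwise the third bullet of Lemma \ref{lemma:constructiveargument} is automatically satisfied.

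Applying the first conclusion of Lemma \ref{lemma:constructiveargument} then produces $v_1, v_2 \in N$ such that $\langle v_1 m_1, v_2 m_2, m_3, \dots, m_d \rangle = G$. The $d-2$ elements $m_3, \dots, m_d$ all lie in $M$, yielding the bound $\mathscr{D}_M(G) \geq d(G) - 2$.

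The main obstacle is the reduction to $M_G = 1$, which the type 1 and type 3 propositions treat as routine. Here one must lift a generating set of $G/M_G$ of size $d(G/M_G)$ containing $d(G/M_G) - 2$ elements in $M/M_G$ to a generating set of $G$ of size $d(G)$ containing $d(G) - 2$ elements in $M$: when $d(G) > d(G/M_G)$ the extra slots are padded with elements of $M_G \leq M$, and when $d(G) = d(G/M_G)$ one relies on a Gaschütz-style lifting through the normal subgroup $M_G$.
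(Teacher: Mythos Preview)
Your proposal is correct and follows essentially the same route as the paper's proof: reduce to $M_G = 1$, pick $d(G)$ elements of $M$ generating modulo $N = \soc(G)$, arrange the hypotheses of Lemma~\ref{lemma:constructiveargument} via the replacement trick, and read off $d(G)-2$ generators in $M$. One small imprecision: in your reduction paragraph, the case $d(G) > d(G/M_G)$ also needs the Gasch\"utz-type lifting through $M_G$ (mere padding does not guarantee the lifted tuple generates $G$), but since you already invoke that lemma for the equal-rank case the fix is immediate.
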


\begin{proof}
    We may assume $M_G = 1$. Then, by definition, $G$ is a monolithic group, and $M$ is a maximal subgroup which projects onto $G / \soc(G)$. Hence, we can choose $g_1, \dots, g_{d(G)}$ in $M$ such that 
    \[
        \langle g_1, \dots, g_{d(G)} \rangle \soc(G) = G.
    \]
    The result is hence clear by Lemma \ref{lemma:constructiveargument}, using a similar argument as in the proof of Theorem \ref{theorem:main}.
\end{proof}

\begin{prop}Assume that $M$ is a maximal subgroup of a finite group $G$ of type 2 and let $N/M_G=\soc(G/M_G).$ If $G/N$ is cyclic, then $\mathcal D_M(G)=d(G)-1.$
\end{prop}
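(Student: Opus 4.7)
The plan is to reduce to the monolithic case and construct a generating pair of $G$ with one element in $M$. Passing to $G/M_G$, I may assume $M_G = 1$, so $G$ is monolithic with non-abelian socle $N = S^n$ and $M$ is a core-free maximal subgroup with $MN = G$. Since $G/N$ is cyclic, $d(G/N) \leq 1$ and so $d(G) = \max(2, d(G/N)) = 2$ by \cite{lm}. Because $\mathscr{D}_M(G) \leq d(G)-1$ is automatic, it suffices to find $m_1 \in M$ and $g_2 \in G$ with $\langle m_1, g_2\rangle = G$.

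Since $M$ surjects onto $G/N$, for any element $m_1 \in M$ whose image generates $G/N$, writing $m_1 = (\alpha_1, \dots, \alpha_n)\rho$ in the embedding $G \leq K_1 \wr S_n$, the image $\rho = m_1\pi$ generates the cyclic transitive group $G\pi \leq S_n$ and is therefore an $n$-cycle. In particular $\rho$ has a single cycle, so the first hypothesis of Corollary~\ref{cor:alterg2} is vacuously satisfied. Let $a_1 = a_1(m_1)$ denote the first coordinate of $m_1^n$.

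Suppose there exists such an $m_1$ with $a_1 \neq 1$ (this is automatic for $n = 1$, and for $n \geq 2$ it covers all cases except when $\langle m_1\rangle$ is forced to be a complement of $N$). I apply Corollary~\ref{cor:alterg2} with $g_1 = m_1$ and $g_2 = 1$; then $b_1 = 1$, and the remaining hypothesis becomes the existence of $s \in S$ with $\langle a_1, s\rangle \geq S$. Such $s$ exists by applying the Guralnick--Kantor two-generation theorem to the almost simple subgroup $\langle a_1\rangle S$ of $\Aut(S)$. The corollary then returns $v \in N$ with $\langle m_1, v\rangle = G$, as required.

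The main obstacle is the residual case where $a_1 = 1$ for every choice of generator $m_1 \in M$, which forces $|m_1| = n = |G/N|$ and makes $\langle m_1\rangle$ a cyclic complement of $N$; here Corollary~\ref{cor:alterg2} fails because $\langle a_1, sb_1\rangle$ is cyclic. I would argue directly: fix such an $m_1$ and take $g_2 = (s, t, 1, \dots, 1) \in S^n$ with $s, t \in S$ chosen so that $\langle s, \beta t \beta^{-1}\rangle = S$ for the specific $\beta := \alpha_1 \alpha_n \alpha_{n-1} \cdots \alpha_3 \in \Aut(S)$, and, if $n = 2$, additionally so that $g_2$ avoids every $\langle m_1\rangle$-invariant proper diagonal of $S^2$; such $s, t$ exist by the two-generation of $S$. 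A direct computation of the conjugates $m_1^k g_2 m_1^{-k}$ shows that the $\langle m_1\rangle$-closure of $g_2$ in $N$ projects onto every factor $S_i$ by transitivity of $\langle m_1\rangle$ on $\{1, \dots, n\}$, hence is a full subdirect product. By Scott's lemma, together with the fact that the $\langle m_1\rangle$-invariant partitions of $\{1, \dots, n\}$ correspond to divisors of $n$, the proper $\langle m_1\rangle$-invariant full subdirect products of $S^n$ are diagonals indexed by divisors $d < n$ with block size $n/d \geq 2$; for $n \geq 3$ the trivial entries in positions $3, \dots, n$ of $g_2$ exclude it from all of these, while the case $n = 2$ is excluded by construction. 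Therefore the closure equals $N$, so $\langle m_1, g_2\rangle \cap N = N$ and $\langle m_1, g_2\rangle = G$.
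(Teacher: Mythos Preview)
Your reduction to the monolithic case with $d(G)=2$ and the case~(a) argument via Corollary~\ref{cor:alterg2} together with Guralnick--Kantor are correct. The paper, however, takes a much shorter route that bypasses your case split entirely: once $M_G=1$, every proper quotient of $G$ is cyclic, and under this hypothesis the O'Nan--Scott theorem rules out the twisted-wreath type, so $M\cap N\neq 1$. One then picks any $1\neq g\in M$ with $\langle g\rangle N=G$ and invokes \cite[Theorem~1]{bgh} (the Burness--Guralnick--Harper spread theorem) to obtain $x\in G$ with $\langle g,x\rangle=G$. Your approach essentially rederives a special instance of the spread theorem by hand from the machinery of Lemma~\ref{lemma:constructiveargument}; this stays within the paper's own toolkit but is considerably longer and more delicate.

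Your residual case~(b) is not fully justified. First, the explicit $\beta=\alpha_1\alpha_n\alpha_{n-1}\cdots\alpha_3$ does not match the automorphism that actually arises when one conjugates $g_2$ by the relevant power of $m_1$; with the standard wreath-product conventions the first coordinate of $m_1 g_2 m_1^{-1}$ is $\alpha_1 t\alpha_1^{-1}$, so the correct $\beta$ is simply $\alpha_1$. This is only a computational slip. More seriously, for $n=2$ you assert that $s,t$ can be chosen with $\langle s,\beta t\beta^{-1}\rangle=S$ while $(s,t)$ avoids \emph{every} $\langle m_1\rangle$-invariant diagonal of $S^2$, justified only ``by the two-generation of $S$''. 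Two-generation gives one generating pair, not the freedom to dodge a family of diagonals parametrised by the solutions of a quadratic equation in $\Aut(S)$; a counting argument is needed here and is missing. A clean way around this is to use the paper's observation that $M\cap N\neq 1$ under the cyclic-quotient hypothesis: multiplying $m_1$ by a suitable $z\in M\cap N$ typically forces $a_1(m_1 z)\neq 1$ and returns you to case~(a), or---more simply---one just appeals to \cite{bgh} as the paper does.
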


\begin{proof}We may assume $M_G=1.$ Choose $g\in G$ such that $\langle g, N\rangle=G.$ Since $MN=G$ and, as a consequence of the O'Nan-Scott Theorem, $M\cap N\neq 1,$ we may assume $1\neq g\in M.$ By \cite[Theorem 1]{bgh}, there exists $x\in G$ such that $G=\langle g,x\rangle.$
\end{proof}

We additionally prove the following proposition, which allows us to show that $\mathscr{D}_M(G) = d(G) - 1$ in the specific situation where $d(G) > d(G/M_G)$.

\begin{prop}
    Let $G$ be a monolithic group with non-abelian socle, and let $M$ be a core-free maximal subgroup of $G$. If $d > d(G)$, then we can find a generating set $g_1, \dots, g_d$ of $G$ with $d-1$ elements in $M$.
\end{prop}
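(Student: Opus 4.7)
The plan is to put $d-1$ of the generators inside $M$ so that they project onto a generating set of $G/N$, and then use the extra slot afforded by $d > d(G)$ to complete to a generating set of $G$. First I would observe that, since $M$ is core-free maximal in the monolithic group $G$ with non-abelian socle $N$, we have $MN = G$, so the quotient map restricts to a surjection $M \twoheadrightarrow G/N$. Combined with $d(G) = \max\{2, d(G/N)\}$ from \cite{lm} and the hypothesis $d > d(G)$, this gives $d-1 \geq d(G/N)$, and so we may choose $g_1, \dots, g_{d-1} \in M$ whose images in $G/N$ generate $G/N$. Setting $H := \langle g_1, \dots, g_{d-1}\rangle$, we have $H \leq M$, $HN = G$, and $H < G$, so the remaining task is to produce $g_d \in G$ with $\langle H, g_d\rangle = G$.

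For this I claim that such a $g_d$ can be taken to lie inside $N$: if $H < G$ with $HN = G$ and $N$ is a non-abelian minimal normal subgroup, then there exists $n \in N$ with $\langle H, n\rangle = G$, and we set $g_d := n$. To establish this I would apply the main theorem of \cite{lm} to the $d$-tuple $(g_1, \dots, g_{d-1}, 1)$, which generates $G$ modulo $N$: this produces $n_1, \dots, n_d \in N$ with $\langle g_1 n_1, \dots, g_{d-1} n_{d-1}, n_d \rangle = G$. By the sharper form of this theorem given by Lemma \ref{lemma:constructiveargument}, only two of the $n_i$'s need to be non-trivial, which reduces the task to coordinating those two modifications into a single additional generator.

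The main obstacle will be reducing these two modifications to a single element $g_d$, since the theorem of \cite{lm} a priori supplies a tuple of $N$-elements rather than just one. To handle this I would exploit the flexibility in Lemma \ref{lemma:constructiveargument}: by placing the free slot (initially $1$) as one of the two modified positions, and choosing the flexible subgroup $H \leq K_1$ so that the other modification lies within $M \cap N$, the corresponding $g_i$ remains in $M$ and only the free slot yields the new generator $g_d$. An alternative route with less delicate bookkeeping is the direct structural argument: let $L_1, \dots, L_k$ be the finitely many maximal subgroups of $G$ containing $H$, observe that each satisfies $L_i N = G$ and hence $L_i \cap N < N$, and use the non-abelian structure of $N$ to rule out the possibility that $N = \bigcup_i (L_i \cap N)$; any $n \in N$ outside this union then satisfies $\langle H, n\rangle = G$, as required.
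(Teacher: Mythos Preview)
Your overall strategy matches the paper's: choose $g_1,\dots,g_{d-1}\in M$ projecting onto a generating set of $G/N$ (possible since $MN=G$ and $d-1\geq d(G)\geq d(G/N)$), then exhibit a single further element $g_d$ with $\langle g_1,\dots,g_d\rangle=G$. The divergence is entirely in how this last element is produced, and here both of your proposed routes have genuine gaps.

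For the first route, Lemma~\ref{lemma:constructiveargument} lets you force the modified $g_1v_1$ into $\tilde H$ for an \emph{arbitrary flexible} $H\leq K_1$, but it does not let you force it into an arbitrary core-free maximal subgroup $M$. As the paper itself points out immediately after this proposition, $M\cap S^n$ may be a product of diagonal subgroups or even trivial, and in those cases $M$ is not of the form $\tilde H$ for any flexible $H$; indeed when $M\cap N=1$ there is no nontrivial modification of $g_1$ that remains in $M$. So ``choosing the flexible subgroup so that the other modification lies within $M\cap N$'' cannot be carried out in general.

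For the alternative route, the covering step is where the real content lies and you have not supplied it. Writing $L_1,\dots,L_k$ for the maximal overgroups of $H:=\langle g_1,\dots,g_{d-1}\rangle$, each $L_i\cap N$ is indeed proper in $N$, but there is no a priori bound on $k$, and ``the non-abelian structure of $N$'' does not by itself prevent $N$ from being covered by finitely many proper subgroups. One would need either a counting/probabilistic estimate on the number and indices of the $L_i$, or a direct construction of an element of $N$ avoiding all of them; neither is immediate.

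The paper bypasses both difficulties with an explicit construction of $g_d\in N$. For $n\geq 2$ it takes, via \cite{king}, generators $a,b\in S$ with $|a|=2$ and $|b|$ an odd prime, and sets $g_d=(a,\,b^{\beta_2^{-1}},1,\dots,1)$, where $\beta_2$ is read off from an element of $\langle g_1,\dots,g_{d-1}\rangle$ sending $2\mapsto 1$ under $\pi$; a short Goursat-type argument then shows $\langle g_1,\dots,g_d\rangle\cap S^n=S^n$. For $n=1$ it uses $M\cap S\neq 1$ to ensure $g_1\neq 1$ and then \cite{gk} to find $s\in S$ with $\langle g_1,s\rangle\geq S$. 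This is precisely the missing ``single additional generator in $N$'' that your outline promises but does not deliver.
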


\begin{proof}
    As previously, we will let $N := \soc(G) = S^n$ for some non-abelian simple group $S$, so that $G \leq \Aut(S) \wr S_n$. Let $\pi \colon G \to S_n$ be the natural projection map, and $\pi_i \colon S^n \to S_i$ the projection to each coordinate. 

    Since $d(G)< d$ we can find $x_1,\dots,x_{d-1}$ such that $G=\langle x_1,\dots,x_{d-1}\rangle.$ Since $M$ is core-free, $MN=G$, and so we can find $g_1, \dots, g_{d-1} \in M$ such that
    \[
        \langle g_1, \dots, g_{d-1} \rangle N = G.
    \]

    We will begin by assuming that $n \geq 2$.

    The permutation group $G \pi$ must be transitive, since $S^n$ is the minimal normal subgroup of $G$. Hence, there exists some $h \in \langle g_1, \dots, g_{d-1} \rangle$ such that $h \pi$ maps $2$ to $1$. Let $h = (\beta_1, \dots, \beta_n) (h \pi)$ for some $\beta_i \in \Aut(S)$. 

    By \cite[Theorem 1]{king}, there exists $a, b \in S$ such that $\langle a, b \rangle = S$, and $|a| = 2, |b| = p$ for some odd prime $p$. We will let
    \[
        g_d := (a, b^{\beta_2^{-1}}, 1, \dots, 1),
    \]
    and will consider
    \[
        H := \langle g_1, \dots, g_d \rangle.
    \]

    Since $h \in H$, we have $g_d^h = (b, \dots) \in H$. Hence, $(H \cap S^n)\pi_1$ contains $a, b$, and thus is equal to $S$. As $H \pi$ is transitive, this implies that $(H \cap S^n)\pi_i = S$ for any $i$. Additionally, $H \cap S_1$ contains $a^{|b|} \neq 1$ by our choice of $a,b$. Thus, by Goursat's Lemma, we must have $H \cap S^n = S^n$, concluding the proof.

    If $n=1$, then $G$ is almost simple. We may assume that $g_1 \neq 1$, as $M \cap S \neq 1$ by the end of the proof of the main theorem in \cite{lps}. Hence, by \cite[Theorem 1]{gk} there exists some $s \in S$ such that $\langle g_1, s \rangle \geq S$. Thus, letting $g_d := s$, we have
    \[
        \langle g_1, \dots, g_d \rangle = G,
    \]
    as required.
\end{proof}

\begin{corollary}
    If $M$ is a maximal subgroup of a finite group $G$ of type 2 such that $d(G) > d(G/M_G)$, then 
    \[
        \mathscr{D}_M(G) = d(G) - 1.
    \]
\end{corollary}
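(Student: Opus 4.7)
The plan is to pass to the primitive quotient $\bar G := G/M_G$, invoke the preceding Proposition on $\bar G$, and then lift the resulting generating tuple back to $G$.

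The hypothesis $d(G) > d(G/M_G)$ forces $M_G \neq 1$, since otherwise $G = G/M_G$. Because $M$ is of type $2$, by definition $\bar G$ is a primitive group of type $2$, hence monolithic with non-abelian socle, and $\bar M := M/M_G$ is a core-free maximal subgroup of $\bar G$. Setting $d := d(G)$, the corollary's hypothesis reads $d > d(\bar G)$, so the preceding Proposition applied to $\bar G$ and $\bar M$ yields a generating $d$-tuple $\bar g_1, \dots, \bar g_d$ of $\bar G$ with $\bar g_1, \dots, \bar g_{d-1} \in \bar M$.

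What remains is to lift this tuple to a generating $d$-tuple of $G$. The $M$-membership of the lifts for $i \leq d-1$ will be automatic: since $M_G \leq M$, whenever $\bar g_i \in \bar M$ the entire coset $\bar g_i \subseteq G$ already lies in $M$, so any choice of coset representative works. The lifting itself is an instance of the standard Gaschütz-type principle that, for any $N \trianglelefteq G$ and any $d \geq d(G)$, every generating $d$-tuple of $G/N$ lifts to a generating $d$-tuple of $G$; one argues iteratively along a chief series refining $1 \trianglelefteq M_G$, invoking classical Gaschütz at abelian chief factors and the main theorem of \cite{lm} at non-abelian ones. From the resulting $g_1, \dots, g_d$ one reads off $\mathscr{D}_M(G) \geq d(G) - 1$, and the reverse inequality $\mathscr{D}_M(G) \leq d(G)-1$ is immediate from $M < G$.

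The main obstacle is the lifting step, since intermediate quotients along a chief series of $M_G$ need not themselves be monolithic and hence the non-abelian theorem of \cite{lm} is not directly applicable at every stage. This is handled by a standard Goursat-type reduction that isolates a single non-abelian minimal normal subgroup at a time; the slack provided by $d(G) > d(G/M_G)$ ensures that at every stage the generating tuple has enough entries (as $d = d(G)$ dominates the minimum generator count of every intermediate quotient) for the lifting through the next chief factor to succeed.
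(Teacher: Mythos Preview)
The paper states this Corollary without proof, and your argument is precisely the intended deduction: pass to $\bar G = G/M_G$, apply the preceding Proposition with $d = d(G) > d(\bar G)$ to obtain a generating $d$-tuple of $\bar G$ with $d-1$ entries in $\bar M$, and then lift. Your observation that any preimage of an element of $\bar M$ already lies in $M$ is exactly what makes the membership condition survive the lift, and the upper bound $\mathscr{D}_M(G) \le d(G)-1$ is immediate since $M \ne G$.

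One small comment: your final paragraph slightly overcomplicates the lifting step. The fact that for $d \ge d(G)$ every generating $d$-tuple of $G/N$ lifts to a generating $d$-tuple of $G$ is a standard result (the generalised Gasch\"utz lemma; the abelian case is classical and the non-abelian chief-factor case follows from \cite{lm}), and it does not actually depend on the extra slack $d(G) > d(G/M_G)$ --- all that is needed is $d \ge d(G/N_{i-1})$ at each stage, which holds automatically since $d = d(G)$. The strict inequality $d(G) > d(G/M_G)$ is used only to invoke the Proposition on $\bar G$, not for the lifting. With that minor clarification your proof is complete.
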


In order to consider whether it's possible to improve the bound in Proposition \ref{prop:type2maximalbound}, we must first list the possibilities for $M$. In the proceeding arguments, assume $G$ is a monolithic group, and $M \mleq G$ a core-free maximal subgroup of $G$. Let $S$ be the non-abelian simple group such that $S^n \leq G \leq \Aut(S) \wr S_n$, and $\pi \colon G \to S_n$ the natural projection map. By the maximality of $M$, we must have $M = (M \cap S^n) . \frac{G}{S^n}$. Additionally, since $S$ is a non-abelian simple group, we have the following possibilities:
\begin{enumerate}
    \item $M \cap S^n$ is conjugate to $(R \cap S)^n$, where $R \mleq K_1$, with $K_1$ defined as in the remark above Definition \ref{def:Htilde};
    \item $M \cap S^n = \prod_{B \in \Phi} D_B$ for a partition $\Phi$ of $\{1,\dots,n\}$, and $D_B$ a full diagonal subgroup on $\prod_{i \in B} S$;
    \item $M \cap S^n = 1$.
\end{enumerate}

If $M \cap S^n$ is conjugate to $(R \cap S)^n$, and $R < K_1$ is a flexible subgroup of $K_1$, then we can apply Lemma \ref{lemma:constructiveargument} as in Proposition \ref{prop:type2maximalbound} above to show that
\[
    \mathscr{D}_M(G) = d(G) - 1.
\]
We now state the following propositions in order to illustrate that there are many cases where $R$ is flexible, although this isn't always the case.

\begin{prop}
    Let $S = A_n$, with $n \geq 5$ and $n \neq 6$, and $S \leq K \leq \Aut(S) = S_n$. Then every maximal subgroup $R$ of $K$ not containing $S$ is flexible.
\end{prop}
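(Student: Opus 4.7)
The plan is to exploit $\Aut(A_n) = S_n$ (which holds precisely because $n \ge 5$ and $n \ne 6$), so that $K \in \{A_n, S_n\}$, and then treat the two cases separately, invoking the O'Nan--Scott classification of maximal subgroups of $S_n$ in the harder one.

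If $K = A_n = S$, then any maximal $R < K$ has $R \cap S = R \ne S$ and $R S = S = K$, so the first two flexibility conditions hold automatically. The third condition, applied to the single coset $R$ of $R \cap S = R$ in $R$, reduces to finding a prime $r$ such that the function $g \mapsto |g|_r$ is non-constant on $R$. Since $R \ne 1$, any prime $r$ dividing $|R|$ works: the identity and any element of order $r$ have different $r$-parts.

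If instead $K = S_n$ and $R \not\supseteq A_n$, then $R \cap A_n$ has index $2$ in $R$, so the third flexibility condition reduces to finding a prime $r$ for which $g \mapsto |g|_r$ is non-constant on each of the two cosets $R \cap A_n$ and $R \setminus A_n$. I would then split $R$ by its geometric type under O'Nan--Scott: intransitive $R = S_k \times S_{n-k}$, imprimitive $R = S_a \wr S_b$ with $ab = n$, and primitive $R$. For the intransitive and imprimitive families, the choice $r = 3$ typically works: a $3$-cycle supported in the larger factor (or inside a single block) lies in $R \cap A_n$ and has order $3$, while a transposition (odd, $3$-part $1$) and a transposition times a disjoint $3$-cycle (odd, order $6$, $3$-part $3$) both lie in $R \setminus A_n$ and witness different $3$-parts. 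Low-degree boundary configurations in which a disjoint $3$-cycle is not available would receive ad hoc treatment, e.g.\ switching to $r = 2$ and comparing a transposition with a $4$-cycle.

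The main obstacle will be the primitive case, because the set of element orders appearing in a primitive maximal subgroup can be very restricted, so it may be impossible to realise distinct $r$-parts simultaneously on both cosets for any prime $r$. Here I would appeal to the classification of primitive maximal subgroups of $S_n$ (affine groups $\AGL_d(p)$ with $n = p^d$, almost simple groups in their natural actions, product-action wreath products and twisted wreath products) together with detailed information on element orders in each family, choosing $r$ according to which primes divide the order of a typical element of $R \cap A_n$ and checking that the coset $R \setminus A_n$ also displays at least two distinct $r$-parts. Very small degrees would be verified by direct computation.
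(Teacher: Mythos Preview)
Your approach matches the paper's: reduce to $K \in \{A_n, S_n\}$, split by O'Nan--Scott type, and verify flexibility in each case. The paper's own proof is at least as sketchy as yours---it asserts the check is ``easy'' and writes out only one illustrative family. The one substantive point you are missing is that your declared ``main obstacle,'' the primitive almost simple case, is dispatched in the paper not by a fresh argument but by citing \cite[Section~2]{lm}: for any finite simple $T$ there is a prime $p$ such that every element can have the $p$-part of its order changed by multiplication with a suitable element of $T$, and since $T = \soc(R) \leq R \cap A_n$ this yields flexibility directly. Your explicit handling of the intransitive and imprimitive families, and your coset reformulation of the third flexibility condition, are in fact more detailed than anything the paper supplies for those cases.
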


\begin{proof}
    We can use the O'Nan-Scott classification to obtain a description of all possible $R$. It is then easy to check that each $A_n \nleq R \mleq K$ is flexible. For example, if $R$ is almost simple, with $\soc(R)=T,$ then, by \cite[Section 2]{lm}, there exists a prime $p$ with the property that for every $r\in T$ there is $t\in T$ such that $|r|_p\neq |rt|_p$. Since $T\cap  R\leq A_n$, this implies that $R$ is flexible
\end{proof}

\begin{prop}
    Let $S = A_6$. Then there exists a maximal subgroup $R$ of $\Aut(S)$, not containing $S$, which is not flexible.
\end{prop}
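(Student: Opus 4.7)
The plan is to exhibit a specific maximal subgroup $R$ of $\Aut(A_6) \cong P\Gamma L_2(9)$, not containing $A_6$, and verify directly that it fails the flexibility condition for every prime $r$.

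First I would enumerate the candidates. Since $|\Aut(A_6)| = 1440$ and $\Out(A_6) \cong V_4$, any maximal $R \mleq \Aut(A_6)$ with $R \cap A_6 \neq A_6$ must satisfy $R A_6 = \Aut(A_6)$, so $R$ hits every outer coset and $R_0 := R \cap A_6$ is a proper $\Aut(A_6)$-invariant subgroup of $A_6$ (with $R = N_{\Aut(A_6)}(R_0)$). The maximal subgroups of $A_6$ are the two $A_6$-classes of $A_5$, the two $A_6$-classes of $S_4$, and the class of $(C_3 \times C_3) \rtimes C_4$; the exceptional outer automorphism of $A_6$ swaps each pair of classes. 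Combining pairs fused under $\Out(A_6)$ gives a small list of candidate maximal subgroups $R$ not containing $A_6$, to be produced from these normalizers.

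The heart of the argument is to pick $R$ so that, for each prime $r \in \{2,3,5\}$ dividing $|A_6|$, one can exhibit some $h \in R$ such that $|h|_r = |hs|_r$ for every $s \in R_0$. The strategy is to take $h$ in an outer coset of $A_6$ chosen so that $h$ and its whole coset $hR_0$ have the same $r$-part of order: multiplying by $s$ keeps $hs$ in the same coset, and the structural constraints on $R_0$ (which has relatively few element orders) force this $r$-part to be constant along the coset. The natural candidate is the normalizer of the fused pair of $S_4$ subgroups, since $S_4$ has element orders $1,2,3,4$ only, missing $5$ entirely, which makes handling the prime $5$ especially easy, while the primes $2$ and $3$ can be controlled by choosing $h$ in the $M_{10}$ coset (where all involutions already lie in $A_6$) and in the $PGL_2(9)$ coset respectively.

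The main obstacle is simultaneous control of all three primes: for each prime $r$ a different outer element $h$ must be exhibited, and one must check that $R_0$ is too small (in element-order variety) to perturb $|h|_r$. Since $|\Aut(A_6)| = 1440$ is small, the entire verification reduces to a finite check on conjugacy classes of $R$ and $R_0$, which may be executed by direct case analysis using the character tables of $S_4$, $M_{10}$, $S_6$, and $P\Gamma L_2(9)$, or simply confirmed computationally in GAP or MAGMA. Once the three cases $r = 2, 3, 5$ are disposed of, $R$ violates the flexibility condition for every prime and the proof is complete.
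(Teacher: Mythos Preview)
Your overall plan---enumerate the maximal subgroups of $\Aut(A_6)$ not containing $A_6$, then for each prime $r$ exhibit an $h\in R$ whose coset $h(R\cap A_6)$ has constant $r$-part of element order, with a computational fallback---is exactly what the paper does: its proof is a one-line Magma check, which produces two non-flexible examples of orders $40$ and $144$ and records, for each, the set of element orders occurring in every coset of $R\cap A_6$.

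The concrete candidate you single out, however, does not exist. You argue that $R_0=R\cap A_6$ must be an ``$\Aut(A_6)$-invariant'' subgroup and then nominate an $S_4$. But $R_0$ is only $R$-invariant, not $\Aut(A_6)$-invariant; what is true is that the $A_6$-conjugacy class of $R_0$ must be fixed by all of $\Out(A_6)\cong V_4$ (since $R$ surjects onto $\Out(A_6)$). The two $A_6$-classes of $S_4$ are interchanged by the exceptional outer automorphism, so the normaliser in $\Aut(A_6)$ of any $S_4$ fails to surject onto $\Out(A_6)$ and therefore sits inside one of the three index-$2$ overgroups $S_6$, $PGL_2(9)$, $M_{10}$; it is not maximal in $\Aut(A_6)$. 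The same obstruction rules out the $A_5$-classes. The $A_6$-classes that \emph{are} $\Out(A_6)$-stable are those of the Sylow $3$-normaliser $3^2{:}4$ and the Sylow $5$-normaliser $D_{10}$, and these are precisely what give the paper's two examples, with $|R|=144$ and $|R|=40$ respectively. So your heuristic (``$S_4$ misses $5$, so the prime $5$ is easy'') points at the wrong object; if you rerun your coset-order analysis on $R_0=D_{10}$ or $R_0=3^2{:}4$ (or simply invoke your GAP/Magma fallback), you recover the paper's result.
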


\begin{proof}
    Using Magma, we can find two maximal subgroups $R$ of $\Aut(S)$ which are not flexible. 
    
    The first is a maximal subgroup of size 40, where
    \[
        \big\{ \{|a| : a \in g(R \cap A_6)\} : g \in R \setminus (R \cap A_6) \big\} = \big\{ \{2,10\}, \, \{4\} \big\}.
    \]

    The second is a maximal subgroup of size $144$, where
    \[
        \big\{ \{|a| : a \in g(R \cap A_6)\} : g \in R \setminus (R \cap A_6) \big\} = \big\{ \{2,6\}, \, \{4\}, \, \{8\} \big\}.\qedhere
    \]
\end{proof}

However, even in the cases where $R$ is not flexible, it may still be possible to apply Corollary \ref{cor:alterg2} if we can carefully choose $g_1, g_2$ as part of a generating set to satisfy the conditions of this corollary.

On the other hand, if $M \cap S^n = \prod_{B \in \phi} D_B$ or $M \cap S^n = 1$, then it becomes much harder to apply a method such as that of Lemma \ref{lemma:constructiveargument}. Given elements $g_1, \dots, g_{d(G)} \in M$ with $\langle g_1, \dots, g_{d(G)} \rangle N = G$, we can replace $g_1$ with $v g_1$ for any $v \in M \cap S^n$. Previously, this allowed us quite a lot of freedom at this stage, especially when $R \mleq K_1$ is flexible. On the other hand, since now $M \cap S^n$ is small, possibly trivial, we have very little freedom at this stage. Hence, there's no guarantee that, for a given $g_1, \dots, g_d$, we can find $vg_1$ satisfying the conditions of Corollary \ref{cor:alterg2}. Hence, if we wish to apply such a method, we must instead consider all possibilities $g_1, \dots, g_{d(G)} \in M$ with $\langle g_1, \dots, g_{d(G)} \rangle N = G$, and find whether there exists $g_1$ satisfying the conditions of Corollary \ref{cor:alterg2}.

\end{document}